\theoremstyle{plain}
\newtheorem*{corollary}{Corollary}
\newtheorem{lemma}{Lemma}
\newtheorem{theorem}{Theorem}
\newtheorem*{conjecture}{Conjecture}
\theoremstyle{remark}
\newtheorem*{remark}{Remark}
\theoremstyle{definition}
\newtheorem{example}{Example}
\DeclareMathOperator{\Id}{Id}
\DeclareMathOperator{\id}{id}
\DeclareMathOperator{\ch}{char}
\DeclareMathOperator{\ad}{ad}
\DeclareMathOperator{\End}{End}
\DeclareMathOperator{\Aut}{Aut}
\DeclareMathOperator{\sign}{sign}
\DeclareMathOperator{\Alt}{Alt}
\DeclareMathOperator{\Ann}{Ann}
\DeclareMathOperator{\PIexp}{PIexp}
\begin{document}

\title[On a formula for the PI-exponent]{On a formula for the PI-exponent
of Lie algebras}

\author{A.\,S.~Gordienko}
\address{Vrije Universiteit Brussel, Belgium}
\email{alexey.gordienko@vub.ac.be} 
\keywords{Lie algebra, polynomial identity, derivation, Hopf algebra, $H$-module algebra, 
codimension, cocharacter, Young diagram, affine algebraic group.}

\begin{abstract}
We prove that one of the conditions in M.\,V.~Zaicev's formula for the PI-exponent
and in its natural generalization for the Hopf PI-exponent,
can be weakened. Using the modification of the formula, we prove that if a finite dimensional semisimple Lie algebra acts by derivations on 
a finite dimensional Lie algebra over a field of characteristic $0$,
then the differential PI-exponent coincides with the ordinary one.
Analogously, the exponent
of polynomial $G$-identities of a finite dimensional Lie algebra
with a rational action of a connected reductive affine algebraic group $G$
by automorphisms,
coincides with the ordinary PI-exponent. In addition, we 
provide a simple formula
for the Hopf PI-exponent and prove the existence of the Hopf PI-exponent itself
 for $H$-module Lie algebras whose solvable radical is nilpotent, assuming
 only the $H$-invariance of the radical, i.e.
 under weaker assumptions on the $H$-action, than in
the general case. As a consequence, we show that the analog of Amitsur's
conjecture holds for $G$-codimensions of all finite dimensional
Lie $G$-algebras whose solvable radical is nilpotent, for an arbitrary group $G$.
\end{abstract}

\subjclass[2010]{Primary 17B01; Secondary 17B10, 17B40, 16T05, 20C30, 14L17.}

\thanks{
Supported by Fonds Wetenschappelijk Onderzoek~--- Vlaanderen Pegasus Marie Curie post doctoral fellowship (Belgium).
}

\maketitle

\section{Introduction}
The intensive study of polynomial identities and their numeric invariants
revealed the strong connection of the invariants with the structure of an algebra~\cite{Bahturin, DrenKurs, ZaiGia, ZaiLie}. If an algebra is endowed with a grading, an action of a Lie algebra
by derivations, an action of a group by automorphisms and anti-automorphisms, or an action of a Hopf algebra, it is natural to consider graded, differential, $G$- or $H$-identities~\cite{BahtZaiGradedExp,
BahtZaiSehgal, BereleHopf, Kharchenko}.

In 2002, M.\,V.~Zaicev~\cite{ZaiLie} proved a formula
for the PI-exponent of finite dimensional Lie algebras
over an algebraically closed field of characteristic $0$. It can be shown~\cite{ASGordienko2, ASGordienko5, GordienkoKochetov}
 that, under some assumpions, the natural generalization of the formula (see Subsection~\ref{SubsectionOrigFormulaHPIexp}) holds for the exponent
of graded, differential, $G$-, and $H$-identities too.
In Subsection~\ref{SubsectionModificationHPIexp} we prove that one of the conditions can be weakened,
which makes the formula easier to apply.

In~\cite{GordienkoKochetov}, the authors showed
that if a connected reductive affine algebraic group~$G$
 acts on a finite dimensional associative algebra~$A$
rationally by automorphisms,
then the exponent of $G$-identities coincides with the ordinary PI-exponent of $A$.
Also, if a finite dimensional semisimple Lie algebra acts on 
 a finite dimensional associative algebra by derivations,
 then the differential PI-exponent coincides with the ordinary one.
Using the modification of M.\,V.~Zaicev's formula, we prove the
analogous results for finite dimensional Lie algebras (Theorems~\ref{TheoremLieDerPIexpEqual}
and~\ref{TheoremLieGConnPIexpEqual} in Section~\ref{SectionApplLieDer}).

In Section~\ref{SectionHRTheSame} we consider finite dimensional $H$-module Lie algebras~$L$
such that the solvable radical of $L$ is nilpotent and $H$-invariant. We prove the analog of Amitsur's conjecture for such algebras $L$
and provide a simple formula for the Hopf PI-exponent of $L$.

\section{Polynomial $H$-identities and their codimensions}\label{SectionDerH}

Let $H$ be a Hopf algebra over a field $F$.
An algebra $A$
over $F$
is an \textit{$H$-module algebra}
or an \textit{algebra with an $H$-action},
if $A$ is endowed with a homomorphism $H \to \End_F(A)$ such that
$h(ab)=(h_{(1)}a)(h_{(2)}b)$
for all $h \in H$, $a,b \in A$. Here we use Sweedler's notation
$\Delta h = h_{(1)} \otimes h_{(2)}$ where $\Delta$ is the comultiplication
in $H$.
\begin{example}
If $M$ is an $H$-module, then $\End_F(M)$ is an associative $H$-module
algebra where $(h\psi)(v)=h_{(1)}\psi((Sh_{(2)})v)$ for all $h\in H$, $\psi \in \End_F(M)$, and $v\in M$.
(Here $S$ is the antipode of $H$.)
\end{example}

We refer the reader to~\cite{Abe, Danara, Montgomery, Sweedler}
   for an account
  of Hopf algebras and algebras with Hopf algebra actions.
  
     Let $F \lbrace X \rbrace$ be the absolutely free nonassociative algebra
   on the set $X := \lbrace x_1, x_2, x_3, \ldots \rbrace$.
  Then $F \lbrace X \rbrace = \bigoplus_{n=1}^\infty F \lbrace X \rbrace^{(n)}$
  where $F \lbrace X \rbrace^{(n)}$ is the linear span of all monomials of total degree $n$.
   Let $H$ be a Hopf algebra over a field $F$. Consider the algebra $$F \lbrace X | H\rbrace
   :=  \bigoplus_{n=1}^\infty H^{{}\otimes n} \otimes F \lbrace X \rbrace^{(n)}$$
   with the multiplication $(u_1 \otimes w_1)(u_2 \otimes w_2):=(u_1 \otimes u_2) \otimes w_1w_2$
   for all $u_1 \in  H^{{}\otimes j}$, $u_2 \in  H^{{}\otimes k}$,
   $w_1 \in F \lbrace X \rbrace^{(j)}$, $w_2 \in F \lbrace X \rbrace^{(k)}$.
We use the notation $$x^{h_1}_{i_1}
x^{h_2}_{i_2}\ldots x^{h_n}_{i_n} := (h_1 \otimes h_2 \otimes \ldots \otimes h_n) \otimes x_{i_1}
x_{i_2}\ldots x_{i_n}$$ (the arrangements of brackets on $x_{i_j}$ and on $x^{h_j}_{i_j}$
are the same). Here $h_1 \otimes h_2 \otimes \ldots \otimes h_n \in H^{{}\otimes n}$,
$x_{i_1} x_{i_2}\ldots x_{i_n} \in F \lbrace X \rbrace^{(n)}$. 

Note that if $(\gamma_\beta)_{\beta \in \Lambda}$ is a basis in $H$, 
then $F \lbrace X | H\rbrace$ is isomorphic to the absolutely free nonassociative algebra over $F$ with free formal  generators $x_i^{\gamma_\beta}$, $\beta \in \Lambda$, $i \in \mathbb N$.
 
    Define on $F \lbrace X | H\rbrace$ the structure of a left $H$-module
   by $$h\,(x^{h_1}_{i_1}
x^{h_2}_{i_2}\ldots x^{h_n}_{i_n})=x^{h_{(1)}h_1}_{i_1}
x^{h_{(2)}h_2}_{i_2}\ldots x^{h_{(n)}h_n}_{i_n},$$
where $h_{(1)}\otimes h_{(2)} \otimes \ldots \otimes h_{(n)}$
is the image of $h$ under the comultiplication $\Delta$
applied $(n-1)$ times, $h\in H$. Then $F \lbrace X | H\rbrace$ is \textit{the absolutely free $H$-module nonassociative algebra} on $X$, i.e. for each map $\psi \colon X \to A$ where $A$ is an $H$-module algebra,
there exists a unique homomorphism $\bar\psi \colon 
F \lbrace X | H\rbrace \to A$ of algebras and $H$-modules, such that $\bar\psi\bigl|_X=\psi$.
Here we identify $X$ with the set $\lbrace x^1_j \mid j \in \mathbb N\rbrace \subset F \lbrace X | H\rbrace$.

Consider the $H$-invariant ideal $I$ in $F\lbrace X | H \rbrace$
generated by the set \begin{equation}\label{EqSetOfHGen}
\bigl\lbrace u(vw)+v(wu)+w(uv) \mid u,v,w \in  F\lbrace X | H \rbrace\bigr\rbrace \cup\bigl\lbrace u^2 \mid u \in  F\lbrace X | H \rbrace\bigr\rbrace.
\end{equation}
 Then $L(X | H) := F\lbrace X | H \rbrace/I$
is \textit{the free $H$-module Lie algebra}
on $X$, i.e. for any $H$-module Lie algebra $L$ 
and a map $\psi \colon X \to L$, there exists a unique homomorphism $\bar\psi \colon L(X | H) \to L$
of algebras and $H$-modules such that $\bar\psi\bigl|_X =\psi$. 
 We refer to the elements of $L(X | H)$ as \textit{Lie $H$-polynomials}
 and use the commutator notation for the multiplication.


\begin{remark} If $H$ is cocommutative and $\ch F \ne 2$, then $L(X | H)$ is the ordinary
free Lie algebra with free generators $x_i^{\gamma_\beta}$, $\beta \in \Lambda$, $i \in \mathbb N$
where   $(\gamma_\beta)_{\beta \in \Lambda}$ is a basis in $H$, since the ordinary ideal of 
$F\lbrace X | H \rbrace$ generated by~(\ref{EqSetOfHGen})
is already $H$-invariant.
However, if $h_{(1)} \otimes h_{(2)} \ne h_{(2)} \otimes h_{(1)}$ for some $h \in H$,
we still have $$[x^{h_{(1)}}_i, x^{h_{(2)}}_j]=h[x_i, x_j]=-h[x_j, x_i]=-[x^{h_{(1)}}_j, x^{h_{(2)}}_i]
= [x^{h_{(2)}}_i, x^{h_{(1)}}_j]$$ in $L(X | H)$ for all $i,j \in\mathbb N$,
i.e. in the case $h_{(1)} \otimes h_{(2)} \ne h_{(2)} \otimes h_{(1)}$ the algebra $L(X | H)$ is not free as an ordinary Lie algebra.
\end{remark}

Let $L$ be an $H$-module Lie algebra for
some Hopf algebra $H$ over a field $F$.
 An $H$-polynomial
 $f \in L ( X | H )$
 is a \textit{$H$-identity} of $L$ if $\psi(f)=0$
for all homomorphisms $\psi \colon L(X|H) \to L$
of algebras and $H$-modules. In other words, $f(x_1, x_2, \ldots, x_n)$
 is a polynomial $H$-identity of $L$
if and only if $f(a_1, a_2, \ldots, a_n)=0$ for any $a_i \in L$.
 In this case we write $f \equiv 0$.
The set $\Id^H(L)$ of all polynomial $H$-identities
of $L$ is an $H$-invariant ideal of $L(X|H)$.

Denote by $V^H_n$ the space of all multilinear Lie $H$-polynomials
in $x_1, \ldots, x_n$, $n\in\mathbb N$, i.e.
$$V^{H}_n = \langle [x^{h_1}_{\sigma(1)},
x^{h_2}_{\sigma(2)}, \ldots, x^{h_n}_{\sigma(n)}]
\mid h_i \in H, \sigma\in S_n \rangle_F \subset L( X | H ).$$
(All long commutators in the article are left-normed, although this is not important
in this particular case in virtue of the Jacobi identity.)
The number $c^H_n(L):=\dim\left(\frac{V^H_n}{V^H_n \cap \Id^H(L)}\right)$
is called the $n$th \textit{codimension of polynomial $H$-identities}
or the $n$th \textit{$H$-codimension} of $L$.

The analog of Amitsur's conjecture for $H$-codimensions of $L$ can be formulated
as follows.

\begin{conjecture}  There exists
 $\PIexp^H(L):=\lim\limits_{n\to\infty}
 \sqrt[n]{c^H_n(L)} \in \mathbb Z_+$.
\end{conjecture}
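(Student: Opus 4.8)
Since the conjecture is open in this generality — it already fails, or is unknown, for Lie algebras with no $H$-action once finite dimensionality is dropped — the plan is to establish it under the hypotheses that will be in force later in the paper: $L$ finite dimensional over a field $F$ of characteristic $0$, the $H$-action such that $c_n^H(L)<\infty$, and the solvable radical $N$ of $L$ nilpotent and $H$-invariant. The preliminary reductions are standard: $c_n^H(L)$ is not changed by extending $F$ to its algebraic closure, so assume $F=\bar F$; and $\Id^H(L)$, hence the whole codimension sequence, is not changed if $H$ is replaced by a suitable finite dimensional Hopf algebra acting on $L$ the same way, so assume $\dim_F H<\infty$ (this is the step that absorbs "an arbitrary group $G$"). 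The candidate for the limit is the integer
$$d := \max\dim\bigl(B_{i_1}\oplus\cdots\oplus B_{i_r}\bigr),$$
the maximum taken over all tuples of pairwise distinct simple ideals $B_{i_1},\dots,B_{i_r}$ of a maximal semisimple subalgebra $B$ of $L$ (with $L=B\oplus N$ as vector spaces, by Levi's theorem) for which some left-normed product of these blocks, interspersed with factors from $N$, is nonzero; here $d=0$ exactly when $L$ is nilpotent. One must check that $d$ does not depend on the choice of $B$ and, crucially, that it can be read off from the $H$-invariant structure — using that $N$ is $H$-invariant and that the semisimple $H$-module Lie algebra $L/N$ decomposes as a sum of simple $H$-module ideals — even though $B$ itself need not be $H$-invariant.

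The upper bound $c_n^H(L)\le C_1 n^{t_1} d^n$ I would obtain through the $S_n$-cocharacter $\chi_n^H(L)=\sum_{\lambda\vdash n}m_\lambda\chi_\lambda$, invoking the generalization of M.\,V.~Zaicev's formula from Subsection~\ref{SubsectionOrigFormulaHPIexp} in the weakened form proved in Subsection~\ref{SubsectionModificationHPIexp}; it then suffices to verify its hypotheses: that $m_\lambda=0$ unless the Young diagram $D_\lambda$ lies, up to a polynomially bounded correction, in the strip-like region determined by $d$ (so that $\dim\chi_\lambda\le \mathrm{poly}(n)\,d^n$ for the surviving $\lambda$), and that each $m_\lambda$ is bounded by a polynomial in $n$. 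The Lie-algebraic heart is the first of these: feeding a highest weight vector into $L$ and expanding it over left-normed commutators, one uses the nilpotency of $N$ to bound how many variables can be absorbed into the radical, which forces the remaining ones to be carried by at most $d$ "semisimple columns"; it is precisely nilpotency of $N$ that takes the place of the stronger structural hypotheses required in the general case.

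For the matching lower bound $c_n^H(L)\ge C_2 n^{-t_2} d^n$, I would fix distinct simple ideals $B_{i_1},\dots,B_{i_r}$ realizing $d$ together with radical elements $a_0,a_1,\dots,a_r\in N$ witnessing the nonzero product, and build an explicit family of multilinear non-identities: in the semisimple slots substitute the multilinear polynomials realizing the (essentially regular) $S_k$-representations attached to each $B_{i_j}$, using $\PIexp(B_{i_j})=\dim B_{i_j}$ for the simple Lie algebra $B_{i_j}$, and alternate over several disjoint sets of variables evaluated in $N$, the elements $a_0,\dots,a_r$ guaranteeing that the glued expression is not a polynomial $H$-identity. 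Counting the linearly independent elements of $V_n^H/(V_n^H\cap\Id^H(L))$ so produced gives $d^n$ up to a polynomial factor. Squeezing the two estimates yields $\sqrt[n]{c_n^H(L)}\to d\in\mathbb Z_+$, which is the conjecture for this class of algebras, together with the announced closed formula $\PIexp^H(L)=d$.

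The main obstacle will be the strip estimate in the upper bound, combined with the verification that $d$ is genuinely determined by the $H$-invariant data: one has to reconcile the Young-diagram combinatorics with an $H$-action that may preserve no Levi complement, and this is exactly where both the nilpotency of $N$ and the modified form of Zaicev's formula are indispensable. The lower bound is comparatively routine once the witnesses $a_0,\dots,a_r$ have been fixed, modulo the standard but somewhat delicate check that the alternations do not collapse.
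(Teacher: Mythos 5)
You are right that the statement, as it stands, is only a conjecture; the paper itself proves it only for $H$-nice algebras (by citation) and, as new content, for finite dimensional $L$ whose solvable radical equals the nilpotent radical $N$ and is $H$-invariant (Theorem~\ref{TheoremMainNRSame}). Your plan for that case is essentially the paper's: reduce to $F$ algebraically closed, define the candidate exponent $d$ from the decomposition of $L/N$ into $H$-simple ideals and the non-vanishing of certain iterated commutators, prove the upper bound by a cocharacter strip estimate in which the nilpotency of $N$ bounds $\sum_{k>d}\lambda_k$, and prove the lower bound by gluing alternating polynomials attached to the chosen simple blocks. Two points in your sketch would need repair, though. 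First, the upper bound cannot be obtained by ``invoking'' the formula of Subsections~\ref{SubsectionOrigFormulaHPIexp}--\ref{SubsectionModificationHPIexp}: Theorems~\ref{TheoremMainH} and~\ref{Theoremddprime} are stated only for $H$-nice algebras, and the whole point of the $R=N$ case is that Conditions~\ref{ConditionLevi}--\ref{ConditionLComplHred} are not assumed; what you actually describe (vanishing of $m(L,H,\lambda)$ outside a strip of height $d$ plus a polynomial bound on multiplicities) is the direct argument of Lemmas~\ref{LemmaNRSameUpperCochar} and~\ref{LemmaNRSameUpper}, so the content is right but the attribution is not. Second, and more substantively, your $d$ is defined via products of the blocks $\varkappa(B_{i_j})$ themselves, whereas the correct constant~(\ref{EqdRNSame}) uses the $H$-orbits $H\varkappa(B_{i_1}),\ldots,H\varkappa(B_{i_r})$ interspersed with factors from all of $L$ (not only $N$): since evaluating an $H$-polynomial on basis elements of $\varkappa(B_{i_j})$ produces elements of $H\varkappa(B_{i_j})$, the strip estimate fails for your smaller constant, and the equality of upper and lower bounds is exactly the reason the $H$-orbit must be built into the definition. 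You flag this as ``the main obstacle'' but do not resolve it; once $d$ is defined as in~(\ref{EqdRNSame}) and the approximate $H$-linearity $h\varkappa(a)-\varkappa(ha)\in N$ is exploited as in Lemma~\ref{LemmaHRSameLowerPolynomial}, your outline matches the paper's proof.
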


We call $\PIexp^H(L)$ the \textit{Hopf PI-exponent} of $L$.

Here we list three important particular cases:

\begin{example} Every algebra $L$ is an $H$-module algebra
for $H=F$. In this case the $H$-action is trivial and we get ordinary polynomial identities and their codimensions. (See the original definition e.g. in~\cite{Bahturin}.) We write $c_n(L):= c_n^F(L)$, $\Id(L):= \Id^F(L)$,
$V_n(L):=V_n^F(L)$, $\PIexp(L)=\PIexp^F(L)$.
\end{example}

\begin{example}
If $H=FG$ where $FG$ is the group algebra of a group $G$, then 
an $H$-module algebra $L$ is an algebra with a $G$-action by automorphisms.
In this case we get \textit{polynomial $G$-identities} and
\textit{$G$-codimensions}.
We write $c^G_n(L):= c_n^{FG}(L)$, $\Id^G(L):= \Id^{FG}(L)$,
$V^G_n(L):=V_n^{FG}(L)$, $\PIexp^G(L)=\PIexp^{FG}(L)$.
Note that one can consider $G$-actions not only by automorphisms, but by anti-automorphisms too
and define polynomial $G$-identities and
$G$-codimensions in this case as well. (See e.g.~\cite[Section~1.2]{ASGordienko5}.)
\end{example}

\begin{example} If $H=U(\mathfrak g)$ where $U(\mathfrak g)$ is the universal enveloping
algebra of a Lie algebra $\mathfrak g$, then an $H$-module algebra is an algebra
with a $\mathfrak g$-action by derivations. The corresponding $H$-identities are called
\textit{differential identities} or \textit{polynomial identities with derivations}.
\end{example}

\section{Two formulas for the Hopf PI-exponent}

\subsection{$H$-nice Lie algebras}\label{SubsectionHnice}

The analog of Amitsur's conjecture was proved~\cite{ASGordienko5} for a wide class of $H$-module Lie algebras
that we call $H$-nice (see the definition below). The class of $H$-nice algebras includes finite dimensional semisimple $H$-module Lie algebras,
finite dimensional $H$-module Lie algebras for finite dimensional semisimple Hopf algebras $H$,
finite dimensional Lie algebras with a rational action of a reductive affine algebraic group
by automorphisms, and finite dimensional Lie algebras graded by an Abelian group
(see~\cite{ASGordienko5}).

Let $L$ be a finite dimensional $H$-module Lie algebra
where $H$ is a Hopf algebra over an algebraically closed field $F$
of characteristic $0$.
We say that $L$ is \textit{$H$-nice} if either $L$ is semisimple or the following conditions hold:
\begin{enumerate}
\item \label{ConditionRNinv}
the nilpotent radical $N$ and the solvable radical $R$ of $L$ are $H$-invariant;
\item \label{ConditionLevi} \textit{(Levi decomposition)}
there exists an $H$-invariant maximal semisimple subalgebra $B \subseteq L$ such that
$L=B\oplus R$ (direct sum of $H$-modules);
\item \label{ConditionWedderburn} \textit{(Wedderburn~--- Mal'cev decompositions)}
for any $H$-submodule $W \subseteq L$ and associative $H$-module subalgebra $A_1 \subseteq \End_F(W)$, 
the Jacobson radical $J(A_1)$ is $H$-invariant and
there exists an $H$-invariant maximal semisimple associative subalgebra $\tilde A_1 \subseteq A_1$
such that $A_1 = \tilde A_1 \oplus J(A_1)$ (direct sum of $H$-submodules);
\item \label{ConditionLComplHred}
for any $H$-invariant Lie subalgebra $L_0 \subseteq \mathfrak{gl}(L)$
such that $L_0$ is an $H$-module algebra and $L$ is a completely reducible $L_0$-module disregarding $H$-action, $L$ is a completely reducible $(H,L_0)$-module.
\end{enumerate}

\subsection{Original formula}\label{SubsectionOrigFormulaHPIexp}

Let $L$ be an $H$-nice Lie algebra over an algebraically closed field $F$ of characteristic $0$.
Fix some Levi decomposition $L=B\oplus R$ (direct sum of $H$-submodules).

Consider $H$-invariant ideals $I_1, I_2, \ldots, I_r$,
$J_1, J_2, \ldots, J_r$, $r \in \mathbb Z_+$, of the algebra $L$ such that $J_k \subseteq I_k$,
satisfying the conditions
\begin{enumerate}
\item $I_k/J_k$ is an irreducible $(H,L)$-module;
\item for any $H$-invariant $B$-submodules $T_k$
such that $I_k = J_k\oplus T_k$, there exist numbers
$q_i \geqslant 0$ such that $$\bigl[[T_1, \underbrace{L, \ldots, L}_{q_1}], [T_2, \underbrace{L, \ldots, L}_{q_2}], \ldots, [T_r,
 \underbrace{L, \ldots, L}_{q_r}]\bigr] \ne 0.$$
\end{enumerate}

Let $M$ be an $L$-module. Denote by $\Ann M$ its annihilator in $L$.
Let $$d(L, H) := \max \left(\dim \frac{L}{\Ann(I_1/J_1) \cap \dots \cap \Ann(I_r/J_r)}
\right)$$
where the maximum is found among all $r \in \mathbb Z_+$ and all $I_1, \ldots, I_r$, $J_1, \ldots, J_r$
satisfying Conditions 1--2. 

In~\cite[Theorem~9, see also Section~1.8]{ASGordienko5}
the following theorem is proved:

\begin{theorem}\label{TheoremMainH}
Let $L$ be a non-nilpotent $H$-nice Lie algebra over an algebraically closed field $F$ of characteristic $0$.  Then there exist constants $C_1, C_2 > 0$, $r_1, r_2 \in \mathbb R$ such that $$C_1 n^{r_1} d^n \leqslant c^{H}_n(L) \leqslant C_2 n^{r_2} d^n\text{ for all }n \in \mathbb N.$$ Here $d:=d(L, H)$.
\end{theorem}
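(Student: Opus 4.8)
The plan is to derive both inequalities from the asymptotics of the $H$-cocharacter sequence. Decompose the $S_n$-module $V^H_n/\bigl(V^H_n\cap\Id^H(L)\bigr)$ into irreducibles and write $c^H_n(L)=\sum_{\lambda\vdash n}m_\lambda\,d_\lambda$, where $d_\lambda$ is the dimension of the Specht module $M_\lambda$ and $m_\lambda$ its multiplicity. Everything then reduces to two facts: (i) only partitions $\lambda$ contained in a horizontal strip of bounded height and possessing at least $d$ ``long'' rows (equivalently, whose first $d$ rows carry all but boundedly many boxes) can have $m_\lambda\ne 0$, and the $m_\lambda$ are bounded by a polynomial in $n$; and (ii) for every large $n$ at least one such $\lambda$, with $d$ rows of length $\approx n/d$, does satisfy $m_\lambda\ne 0$. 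Feeding (i) into the classical estimate that the sum of $d_\lambda$ over partitions of $n$ lying in a horizontal $k$-strip is $O(n^{c}k^n)$, and (ii) into the hook-length estimate $d_\lambda\ge C n^{r}d^n$ for a near-rectangular $\lambda$ with $d$ rows, yields $C_1 n^{r_1}d^n\le c^H_n(L)\le C_2 n^{r_2}d^n$.

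For the upper bound I would use the $H$-nice hypotheses to make the classical structure theory $H$-equivariant. Fix the $H$-invariant Levi decomposition $L=B\oplus R$ (Condition~\ref{ConditionLevi}) and recall that $N$ and $R$ are $H$-invariant (Condition~\ref{ConditionRNinv}); since $N$ is nilpotent, any multilinear value of a Lie $H$-polynomial can be reorganized, via the Jacobi identity, so that only a bounded number of arguments contribute through $N$. Passing to the associative envelope $A\subseteq\End_F(L)$ of the adjoint action, the $H$-invariant Wedderburn--Mal'cev decomposition $A=\tilde A\oplus J(A)$ (Condition~\ref{ConditionWedderburn}) together with the complete reducibility of $L$ as an $(H,L_0)$-module for the relevant $L_0$ (Condition~\ref{ConditionLComplHred}) supports a Kemer-type argument bounding both the length on which a non-identity may alternate and the number of $J(A)$-factors it may use. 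Bookkeeping how the simple blocks of $\tilde A$ and the radical slots combine shows that the largest exponential rate not killed this way is exactly $\max\dim\bigl(L/(\Ann(I_1/J_1)\cap\dots\cap\Ann(I_r/J_r))\bigr)=d$, and that the surviving $\lambda$ sit in a bounded strip with at most $d$ long rows; the Giambruno--Zaicev strip estimates then supply $C_2$ and $r_2$.

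For the lower bound I would exhibit an explicit non-identity of rate $d$. Choose $H$-invariant ideals $I_1\supseteq J_1,\dots,I_r\supseteq J_r$ attaining the maximum in the definition of $d$, pick $H$-invariant $B$-complements $T_k$ with $I_k=J_k\oplus T_k$, and fix $q_k\ge 0$ and elements witnessing $\bigl[[T_1,\underbrace{L,\dots,L}_{q_1}],\dots,[T_r,\underbrace{L,\dots,L}_{q_r}]\bigr]\ne 0$. Using a basis of $L/(\Ann(I_1/J_1)\cap\dots\cap\Ann(I_r/J_r))$ together with these witnessing elements, I would build, for each large $n$, a multilinear Lie $H$-polynomial alternating on $d$ disjoint sets of $\approx n/d$ variables whose value on $L$ is nonzero — the Lie/$H$ analogue of Regev's ``alternation on complementary sets'' construction, with the $H$-action carried in the coefficients. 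Existence of such a polynomial forces $m_\lambda\ne 0$ for some $\lambda$ with $d$ rows of length $\approx n/d$, and then $d_\lambda\ge C_1 n^{r_1}d^n$.

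The main obstacle is the upper bound, specifically the simultaneous control of: (a) keeping every reduction strictly $H$-equivariant — this is precisely what Conditions~\ref{ConditionRNinv}--\ref{ConditionLComplHred} are for, and one must check that no step secretly uses a non-invariant complement, radical, or module decomposition; (b) matching the Kemer-type block/radical count against the annihilator description, which is the reason the two bounds share the base $d$ and is the technical heart of the theorem; and (c) getting the estimates for \emph{all} $n$ rather than asymptotically, which, since $L$ non-nilpotent forces $c^H_n(L)\ge 1$ for all $n$, is handled by absorbing finitely many small $n$ into the constants. The case $L$ semisimple is separate but easier: then $R=0$, there are no radical slots, $d$ is read off directly from the simple ideals of $L$, and the bound follows from the structure of $L$ as a sum of simple ideals.
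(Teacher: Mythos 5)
First, a point of reference: the paper does not actually prove Theorem~\ref{TheoremMainH}; it is imported verbatim from \cite[Theorem~9]{ASGordienko5}. The only in-paper material to compare against is the proof of the analogous Theorem~\ref{TheoremMainNRSame} in Section~\ref{SectionHRTheSame}, which follows exactly the strategy you describe: an upper bound via a cocharacter-vanishing lemma (all but boundedly many boxes of any $\lambda$ with $m(L,H,\lambda)\ne 0$ lie in the first $d$ rows, cf.\ Lemma~\ref{LemmaNRSameUpperCochar}), polynomially bounded multiplicities, and the Giambruno--Zaicev strip estimates; and a lower bound via an explicit multilinear $H$-polynomial alternating on $2k\approx n/d$ disjoint sets of $d$ variables, which forces a near-rectangular $\lambda$ and yields the estimate through the hook formula. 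Your road map, including the separate treatment of the semisimple case, is the correct one and coincides with that of the cited proof.

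As a proof, however, the proposal has two genuine gaps, and they are precisely the two central technical results of \cite{ASGordienko5}. (a) For the upper bound, the assertion that ``bookkeeping how the simple blocks and the radical slots combine shows that the largest surviving exponential rate is exactly $d$'' \emph{is} the vanishing lemma one has to prove: one must show that alternating on more than $d$ variables sufficiently many times drives every evaluation into products annihilated by $\Ann(I_1/J_1)\cap\dots\cap\Ann(I_r/J_r)$ for every admissible choice of the $I_k, J_k$, and this is where Conditions~\ref{ConditionWedderburn} and~\ref{ConditionLComplHred} and the universal quantifier over complements $T_k$ in Condition~2 of Subsection~\ref{SubsectionOrigFormulaHPIexp} do real work; none of this is carried out. (b) For the lower bound, there is no off-the-shelf ``Lie/$H$ analogue of Regev's alternation on complementary sets'': one needs the multialternating associative $H$-polynomials nonvanishing on $\ad$ of the $H$-simple components (\cite[Theorem~11]{ASGordienko5}, the same ingredient used in Lemma~\ref{LemmaHRSameLowerPolynomial} here), together with the Density Theorem to realize matrix units and a delicate insertion of these associative polynomials into a single long Lie commutator that survives evaluation; this gluing is the hard part and is only named, not constructed. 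So the proposal is a correct outline of the published argument, but both of its load-bearing steps are asserted rather than proved.
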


In particular, there exists $\PIexp^H(L)=d(L, H)\in \mathbb Z_+$.

\subsection{Modification}\label{SubsectionModificationHPIexp}

Let $L$ be an $H$-nice Lie algebra.
By~\cite[Lemma~10]{ASGordienko5}, $L=B\oplus S \oplus N$
for some $H$-submodule $S\subseteq R$ such that $[B, S] = 0$.
Consider the associative subalgebra $A_0$ in $\End_F(L)$
generated by $\ad S$. Note that $A_0$ is an $H$-module algebra
since $S$ is $H$-invariant. By Condition~\ref{ConditionWedderburn}
of Subsection~\ref{SubsectionHnice},
$A_0 = \tilde A_0 \oplus J(A_0)$ (direct sum of $H$-submodules)
where $\tilde A_0$ is a maximal semisimple subalgebra of $A_0$.
(If $L$ is semisimple, $A_0=\tilde A_0=0$.)

\begin{lemma}\label{LemmaA0DirectSumOfFields}
$\tilde A_0=Fe_1 \oplus \dots \oplus Fe_q$ (direct sum of ideals)
    for some idempotents $e_i \in A_0$.
\end{lemma}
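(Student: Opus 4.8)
The plan is to reduce the statement to the classical structure theory of commutative semisimple associative algebras. By construction $\tilde A_0$ is a finite dimensional semisimple associative algebra over the algebraically closed field $F$, so the Wedderburn--Artin theorem will present it as a direct product of full matrix algebras over finite extensions of $F$; since $F$ is algebraically closed these extensions all equal $F$. Thus it suffices to prove that $\tilde A_0$ is \emph{commutative}: then every matrix block has size one, the identity elements $e_1,\dots,e_q$ of the blocks are idempotents lying in $\tilde A_0\subseteq A_0$, each $Fe_i$ is an ideal, and $\tilde A_0=Fe_1\oplus\dots\oplus Fe_q$ is the required decomposition.

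To obtain commutativity I would first use that $S\subseteq R$, the solvable radical of $L$. The adjoint representation $\ad\colon L\to\mathfrak{gl}(L)$ is a Lie algebra homomorphism, so $\ad R$ is a homomorphic image of the solvable Lie algebra $R$ and is therefore a solvable Lie subalgebra of $\mathfrak{gl}(L)$. Since $\ch F=0$ and $F$ is algebraically closed, Lie's theorem yields a basis of $L$ in which every operator of $\ad R$, in particular every operator of $\ad S$, is upper triangular; hence the associative algebra $A_0$ generated by $\ad S$ consists of upper triangular matrices in that basis.

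Now let $\mathfrak n\subseteq\End_F(L)$ be the ideal of strictly upper triangular matrices with respect to the chosen basis. Then $A_0\cap\mathfrak n$ is a nilpotent ideal of $A_0$, so $A_0\cap\mathfrak n\subseteq J(A_0)$, and the projection onto the diagonal is an algebra homomorphism of the upper triangular matrices onto $F^{\dim L}$ with kernel $\mathfrak n$; restricting it embeds $A_0/(A_0\cap\mathfrak n)$ into $F^{\dim L}$, so $A_0/(A_0\cap\mathfrak n)$, and a fortiori its quotient $A_0/J(A_0)$, is commutative. Finally, since $A_0=\tilde A_0\oplus J(A_0)$ with $\tilde A_0$ a subalgebra, the canonical projection $A_0\to A_0/J(A_0)$ restricts to an algebra isomorphism $\tilde A_0\cong A_0/J(A_0)$, whence $\tilde A_0$ is commutative. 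The only step needing a genuine idea is the appeal to Lie's theorem that forces $A_0/J(A_0)\cong\tilde A_0$ to be commutative; everything else is routine, and I note that the $H$-module structure is irrelevant for this particular lemma.
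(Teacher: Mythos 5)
Your proof is correct and follows essentially the same route as the paper: both triangularize $\ad R$ (hence $A_0$) via Lie's theorem and deduce that the semisimple part $\tilde A_0$ must be commutative, hence a direct sum of copies of $F$. You merely spell out the commutativity argument (via the diagonal projection with kernel the strictly upper triangular matrices) more explicitly than the paper's terse remark that $A_0$ contains no copy of $M_2(F)$.
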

\begin{proof}
Since $R$ is solvable, by Lie's theorem, there exists a basis of $L$
such that the matrices of all operators $\ad a$, $a\in R$, are upper triangular. Denote the corresponding isomorphism $\End_F(L) \to M_s(F)$ of algebras by $\psi$ where $s := \dim L$. Since $\psi(\ad R) \subseteq UT_s(F)$,
we have $\psi(A_0) \subseteq UT_s(F)$ where $UT_s(F)$ is the associative algebra of 
upper triangular $s\times s$ matrices. However, $$UT_s(F) = Fe_{11}\oplus Fe_{22}\oplus
 \dots\oplus Fe_{ss}\oplus \tilde N$$
 where $$\tilde N := \langle e_{ij} \mid 1 \leqslant i < j \leqslant s \rangle_F$$
 is a nilpotent ideal. Since $\psi$ is an isomorphism, there is no subalgebras in $A_0$
 isomorphic to $M_2(F)$, and
  $\tilde A_0=Fe_1 \oplus \dots \oplus Fe_q$ (direct sum of ideals)
    for some idempotents $e_i \in A_0$.
\end{proof}

Since $[B,S]=0$ and $e_i$ are polynomials in $\ad a$, $a \in S$, we have $[\ad B, \tilde A_0]=0$.
The semisimplicity of $B$ implies $(\ad B) \cap \tilde A_0 = \lbrace 0 \rbrace$.
Now we treat $(\ad B)\oplus \tilde A_0$ as an $H$-module Lie algebra.

\begin{lemma}\label{LemmaLHBA0ComplReducible}
$L$ is a completely reducible $(\ad B)\oplus \tilde A_0$- and $(H, (\ad B)\oplus \tilde A_0)$-module.
\end{lemma}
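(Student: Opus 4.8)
The plan is to decompose $L$ into a sum of $L_0$-submodules for $L_0 := (\ad B)\oplus \tilde A_0$ and check complete reducibility piece by piece, then upgrade to the $(H,L_0)$-version via Condition~\ref{ConditionLComplHred}. First I would observe that $L_0$ is indeed an $H$-module Lie subalgebra of $\mathfrak{gl}(L)$: we already know $[\ad B,\tilde A_0]=0$ and $(\ad B)\cap\tilde A_0=\{0\}$, so $L_0$ is a subalgebra; it is $H$-invariant because $B$ and $S$ are $H$-invariant and the $e_i$ are polynomials in $\ad S$, and it is an $H$-module algebra for the same reason (this matches the hypotheses of Condition~\ref{ConditionLComplHred}). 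So once I prove that $L$ is a completely reducible $L_0$-module disregarding the $H$-action, Condition~\ref{ConditionLComplHred} gives the $(H,L_0)$-statement for free, and that is the whole point of having reduced to $L_0$.

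For the ordinary (non-$H$) complete reducibility, I would argue as follows. The subalgebra $\ad B$ is a semisimple Lie algebra acting on the finite dimensional space $L$, so by Weyl's theorem $L$ is a completely reducible $\ad B$-module. Separately, $\tilde A_0 = Fe_1\oplus\dots\oplus Fe_q$ is a commutative semisimple associative algebra spanned by orthogonal idempotents (Lemma~\ref{LemmaA0DirectSumOfFields}), so $L$ decomposes as a direct sum of the common eigenspaces of the $e_i$, i.e.\ $L$ is a completely reducible $\tilde A_0$-module. Since $\ad B$ and $\tilde A_0$ commute, the idempotent projections onto the $\tilde A_0$-eigenspaces are $\ad B$-module maps, hence these eigenspaces are $\ad B$-submodules; applying Weyl's theorem inside each eigenspace and combining, $L$ becomes a direct sum of subspaces each of which is irreducible over $\ad B$ and is a single eigenspace for each $e_i$ — that is, each summand is an irreducible $L_0$-module. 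Therefore $L$ is a completely reducible $L_0$-module.

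The main obstacle is the interaction between the two commuting actions: one must be careful that ``completely reducible over $\ad B$'' plus ``completely reducible over $\tilde A_0$'' genuinely yields ``completely reducible over the joint algebra'' and not merely over each factor. The clean way is to use that $\tilde A_0$ is semisimple \emph{associative} so its action is diagonalizable with the $e_i$ as a complete system of orthogonal idempotents summing to the identity of $\tilde A_0$ (note $\tilde A_0$ may not contain $\Id_L$, but the complementary ``no eigenvalue'' part is still an $\ad B$-submodule since it is $\ker(e_1+\dots+e_q)$, again an $\ad B$-map), so $L$ splits into $\ad B\oplus\tilde A_0$-invariant blocks on which $\tilde A_0$ acts by scalars; then Weyl's theorem on each block finishes it. With that decomposition in hand, invoking Condition~\ref{ConditionLComplHred} of Subsection~\ref{SubsectionHnice} with $L_0 = (\ad B)\oplus\tilde A_0$ immediately upgrades the statement to: $L$ is a completely reducible $(H,L_0)$-module, which is exactly the claim.
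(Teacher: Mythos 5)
Your argument for the non-semisimple case is essentially the paper's own: simultaneous diagonalization of the commuting idempotents $e_i$ gives $L=\bigoplus_j W_j$, each $W_j$ is $(\ad B)$-stable because $[\ad B,\tilde A_0]=0$, Weyl's theorem splits each $W_j$ into irreducible $(\ad B)$-submodules on which $\tilde A_0$ acts by scalars, so these are irreducible $(\ad B)\oplus\tilde A_0$-submodules, and Condition~\ref{ConditionLComplHred} then upgrades the conclusion to the $(H,(\ad B)\oplus\tilde A_0)$-module statement. That part is correct, and your extra care about $\sum_i e_i$ possibly not being $\Id_L$ is fine (the common-eigenspace decomposition covers all of $L$ regardless).

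There is, however, one genuine gap: you invoke Condition~\ref{ConditionLComplHred} unconditionally, but by the definition in Subsection~\ref{SubsectionHnice} an $H$-nice algebra is \emph{either} semisimple \emph{or} satisfies Conditions~\ref{ConditionRNinv}--\ref{ConditionLComplHred}; in the semisimple case none of those conditions is assumed, so the final upgrade to complete reducibility as an $(H,L_0)$-module is not justified by your argument there. The paper treats the semisimple case separately: then $A_0=\tilde A_0=0$, and $L=B_1\oplus\ldots\oplus B_s$ is a direct sum of $H$-invariant ideals with each $B_i$ an $H$-simple Lie algebra (by~\cite[Theorem~6]{ASGordienko4}), which exhibits $L$ directly as a completely reducible $(H,\ad B)$-module. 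You need to add this case split (or an equivalent argument) to make the proof complete; the ordinary (non-$H$) complete reducibility you establish via Weyl's theorem does not by itself yield the $H$-equivariant statement without some such input.
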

\begin{proof} If $L$ is semisimple, then $L=B_1 \oplus \ldots \oplus B_s$
(direct sum of $H$-invariant ideals)
for some $H$-simple Lie algebras $B_i$ (see~\cite[Theorem~6]{ASGordienko4}),
and $L$ is a completely reducible $(H, (\ad B)\oplus \tilde A_0)$-module.

Suppose now that $L$ satisfies Conditions~\ref{ConditionRNinv}--\ref{ConditionLComplHred}
of Subsection~\ref{SubsectionHnice}. 
Note that $e_i$ are commuting diagonalizable operators
on $L$. Hence they have a common basis of eigenvectors,
and $L=\bigoplus_j W_j$ where $W_j$ are the intersections of eigenspaces of $e_i$.
Each $e_i$ commutes with the operators from $\ad B$. Thus $W_j$ are $(\ad B)$-submodules.
Recall that $B$ is semisimple. Therefore, $W_j$ is the direct sum of irreducible $(\ad B)$-submodules.
Since $e_i$ act on each $W_j$ as scalar operators, $L$ is the direct sum of
irreducible $(\ad B)\oplus \tilde A_0$-submodules. 
Now Condition~\ref{ConditionLComplHred}
of Subsection~\ref{SubsectionHnice} implies the lemma.
\end{proof}

We replace Condition~2 of Subsection~\ref{SubsectionOrigFormulaHPIexp} with Condition~2' below:
\begin{enumerate}
\item[(2')] there exist $H$-invariant $(\ad B)\oplus \tilde A_0$-submodules $T_k$, $I_k = J_k\oplus T_k$,
 and numbers
$q_i \geqslant 0$ such that $$\bigl[[T_1, \underbrace{L, \ldots, L}_{q_1}], [T_2, \underbrace{L, \ldots, L}_{q_2}], \ldots, [T_r,
 \underbrace{L, \ldots, L}_{q_r}]\bigr] \ne 0.$$
\end{enumerate}

Define $$d'(L, H) := \max \left(\dim \frac{L}{\Ann(I_1/J_1) \cap \dots \cap \Ann(I_r/J_r)}
\right)$$
where the maximum is found among all $r \in \mathbb Z_+$ and all $I_1, \ldots, I_r$, $J_1, \ldots, J_r$
satisfying Conditions 1 and 2'.

\begin{theorem}\label{Theoremddprime} Let $L$ be an $H$-nice Lie algebra over an algebraically closed field $F$ of characteristic $0$.
 Then $\PIexp^H(L)=d'(L, H)$.
\end{theorem}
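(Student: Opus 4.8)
The plan is to show that $d(L,H) = d'(L,H)$, so that the conclusion follows immediately from Theorem \ref{TheoremMainH}. Since Condition 2' is formally weaker than Condition 2 (an $(\ad B)\oplus\tilde A_0$-submodule complement is in particular a $B$-submodule complement, and the non-vanishing condition is the same), the collection of tuples $(I_1,\dots,I_r,J_1,\dots,J_r)$ satisfying Conditions 1 and 2 is contained in the collection satisfying Conditions 1 and 2', and hence $d(L,H) \leqslant d'(L,H)$ is trivial. The real content is the reverse inequality $d'(L,H) \leqslant d(L,H)$: every tuple satisfying Conditions 1 and 2' also satisfies Conditions 1 and 2 (possibly after adjusting the choice of complements), so that it is already counted in the maximum defining $d(L,H)$.

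First I would fix a tuple $I_1,\dots,I_r,J_1,\dots,J_r$ satisfying Conditions 1 and 2', with $H$-invariant $(\ad B)\oplus\tilde A_0$-submodules $T_k$ and numbers $q_i$ witnessing the non-vanishing. To verify Condition 2 for this same tuple, I must show that for \emph{every} choice of $H$-invariant $B$-submodule complements $T_k'$ (i.e.\ $I_k = J_k \oplus T_k'$), there are numbers $q_i' \geqslant 0$ with the corresponding long commutator nonzero. The key point is that the value of such a commutator modulo lower terms depends only on the image of $T_k$ in $I_k/J_k$, which is the whole irreducible module — so any two $H$-invariant $B$-submodule complements differ by a map into $J_k$. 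I would then argue, as in the proof of \cite[Theorem~9]{ASGordienko5} (or the analogous reduction there), that if the commutator is nonzero for one choice of complements it is nonzero for all, perhaps after increasing the $q_i$: since $J_k \subseteq I_k$ are ideals and $I_k/J_k$ is $L$-irreducible, padding each slot with enough extra copies of $L$ (adjusting $q_i$) kills the ambiguous $J_k$-contributions while preserving a nonzero contribution coming from the irreducible quotient. Concretely, $[T_k, L, \dots, L]$ eventually stabilizes as a submodule whose image in $I_k/J_k$ is all of $I_k/J_k$, and this stable image is independent of the choice of complement; choosing $q_i$ large enough to reach stability makes the whole expression independent (modulo a nilpotent ideal that does not affect non-vanishing of a long-enough commutator) of the choice of $T_k$ versus $T_k'$.

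The main obstacle I anticipate is precisely this last point: showing that the non-vanishing of the iterated commutator is insensitive to the choice of $B$-submodule complement, given only that it holds for the finer $(\ad B)\oplus\tilde A_0$-invariant complement. This requires a careful use of Lemma \ref{LemmaLHBA0ComplReducible} — complete reducibility of $L$ as an $(\ad B)\oplus\tilde A_0$-module lets us split off $T_k$ cleanly and understand how $\tilde A_0$ acts by the idempotents $e_i$ of Lemma \ref{LemmaA0DirectSumOfFields} — together with the structure theory already developed in \cite{ASGordienko5}. I would look up the exact place in that paper where the commutator condition is used in the proof of the upper and lower bounds, and check that what is actually needed there is only the existence of \emph{some} invariant complement making the commutator nonzero, not that \emph{every} $B$-invariant complement works; if so, Condition 2' suffices to run the same argument verbatim, and the lower bound $c_n^H(L) \geqslant C_1 n^{r_1} d'^n$ follows, while the upper bound with $d$ is already known and $d \leqslant d'$. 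Reconciling these two inequalities then forces $d = d'$ and proves the theorem.
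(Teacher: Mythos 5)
Your proposal really contains two different arguments, and only the second one works. The primary plan --- to show that every tuple satisfying Conditions 1 and 2' already satisfies Condition 2, so that $d'(L,H)=d(L,H)$ follows tuple by tuple --- has a genuine gap at exactly the point you flag. Two $H$-invariant complements $T_k$ and $T_k'$ of $J_k$ in $I_k$ differ by a linear map $\phi\colon T_k\to J_k$, so replacing $T_k$ by $T_k'$ changes the iterated commutator by terms with an entry from $[\phi(t),L,\ldots,L]\subseteq J_k$; since $J_k$ is only assumed to be an $H$-invariant ideal with $I_k/J_k$ irreducible (it is not nilpotent), ``padding each slot with enough extra copies of $L$'' does not kill these contributions, and non-vanishing of the commutator is a statement about actual elements of $L$, not about images in $I_k/J_k$, so stabilization of the image in the irreducible quotient says nothing about possible cancellation in $L$. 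Note also that Condition 2 quantifies over \emph{all} $H$-invariant $B$-complements, so ``after adjusting the choice of complements'' is not an available move there. The paper neither proves nor uses the tuple-wise implication from Condition 2' to Condition 2.

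The fallback in your last paragraph is the paper's actual route: one checks that the lower-bound machinery of \cite[Section~6]{ASGordienko5} needs only \emph{one} good family of complements, runs it with the Condition-2' data to get $c_n^H(L)\geqslant C n^r (d'(L,H))^n$, and combines this with $\PIexp^H(L)=d(L,H)\leqslant d'(L,H)$ from Theorem~\ref{TheoremMainH} (the easy inequality uses Lemma~\ref{LemmaLHBA0ComplReducible} to guarantee that $H$-invariant $(\ad B)\oplus\tilde A_0$-complements exist at all). But you leave the crucial check as a conditional ``if so'', whereas it is the substance of the proof: the rerun is not verbatim, and the paper's specific modifications --- writing $\ad a_{ij}=c_{ij}+d_{ij}$ with $c_{ij}\in\tilde A_0$ and $d_{ij}\in J(A_0)\subseteq J(A)$ in place of \cite[Lemma~15]{ASGordienko5}, and feeding the chosen $H$-invariant $(\ad B)\oplus\tilde A_0$-submodules $\tilde T_k$ into \cite[Lemma~17]{ASGordienko5} --- are exactly what make Condition 2' sufficient. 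As written, the proposal identifies the right target but does not close it.
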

\begin{proof} Clearly, $d'(L, H) \geqslant d(L, H) = \PIexp^H(L)$ since, by Lemma~\ref{LemmaLHBA0ComplReducible}, $L$ is a completely reducible $(H,(\ad B)\oplus \tilde A_0)$-module
and we can always choose $H$-invariant $(\ad B)\oplus \tilde A_0$-submodules $T_k$ such that $I_k = J_k \oplus T_k$.

If $L$ is semisimple, then~\cite[Example~7]{ASGordienko5} implies $d'(L, H) = d(L, H)$.
Hence we may assume that $L$ satisfies Conditions~\ref{ConditionRNinv}--\ref{ConditionLComplHred}
of Subsection~\ref{SubsectionHnice}. 

We prove that there exist $r \in \mathbb R$, $C > 0$ such that $c_n^H(L) \geqslant C n^r (d'(L, H))^n$
for all $n\in \mathbb N$. We
take $H$-invariant ideals $I_1, \ldots, I_r$ and $J_1, \ldots, J_r$
satisfying Conditions 1 and 2' such that
$\dim \frac{L}{\Ann(I_1/J_1) \cap \dots \cap \Ann(I_r/J_r)}
=d'(L, H)$. Then we choose $H$-invariant $(\ad B)\oplus \tilde A_0$-submodules $\tilde T_k$, $I_k = J_k\oplus \tilde T_k$,
 such that $$\bigl[[\tilde T_1, \underbrace{L, \ldots, L}_{q_1}], [\tilde T_2, \underbrace{L, \ldots, L}_{q_2}], \ldots, [\tilde T_r,
 \underbrace{L, \ldots, L}_{q_r}]\bigr] \ne 0$$
 for some numbers $q_i \geqslant 0$. Now we
 repeat the arguments of~\cite[Section~6]{ASGordienko5}
with the following changes. (We use the notation from~\cite[Section~6]{ASGordienko5}.) Instead of using Lemma~15, we choose
$c_{ij}\in \tilde A_0$ and $d_{ij}\in J(A_0)$ such that 
each $\ad a_{ij} = c_{ij}+d_{ij}$. Note that, by the second part of the proof of~\cite[Lemma~5]{ASGordienko5} for $W=S$ and $M=L$, we have $J(A_0)\subseteq J(A)$
where $A$ is the associative subalgebra of $\End_F(L)$ generated by the operators from $H$
and $\ad L$. Hence $d_{ij}\in J(A)$. 
Moreover, $\tilde T_k$ that we have chosen by Condition 2', are $H$-invariant $\tilde B$-submodules, and we use them in~\cite[Lemma~17]{ASGordienko5}.
The rest of the proof is the same as in~\cite[Section~6]{ASGordienko5}.
Finally, we have $\PIexp^H(L)\geqslant d'(L, H)$, and the theorem is proved.
\end{proof}

\section{Lie $G$-algebras and Lie algebras with derivations}\label{SectionApplLieDer}

In~\cite[Theorem~7]{GordienkoKochetov}, the authors proved the existence of the differential PI-exponent
for finite dimensional Lie algebras with an action of a finite dimensional semisimple Lie algebra by derivations. Here we prove that the differential PI-exponent coincides with the ordinary one.

\begin{theorem}\label{TheoremddprimeDiff}
Let $L$ be a finite dimensional Lie algebra
over an algebraically closed field $F$ of characteristic $0$. Suppose a
Lie algebra $\mathfrak g$ is acting on $L$ by derivations, and $L$ is an $U(\mathfrak g)$-nice algebra.
Then $\PIexp(L) = \PIexp^{U(\mathfrak g)}(L)$.
\end{theorem}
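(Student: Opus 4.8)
The plan is to show that both the ordinary PI-exponent $\PIexp(L)$ and the differential PI-exponent $\PIexp^{U(\mathfrak g)}(L)$ are computed by the \emph{same} combinatorial quantity, using the modified formula of Theorem~\ref{Theoremddprime}. Since $H := U(\mathfrak g)$ is cocommutative and $\ch F = 0$, and since $\mathfrak g$ acts by derivations, the key structural point is that the $H$-action is ``tame'' enough that $H$-invariance of a subspace is automatic for all the subspaces that occur in the formula. First I would invoke Theorem~\ref{Theoremddprime} twice: once for $H = U(\mathfrak g)$ to get $\PIexp^{U(\mathfrak g)}(L) = d'(L, U(\mathfrak g))$, and once for $H = F$ (the trivial action, which makes $L$ trivially $H$-nice) to get $\PIexp(L) = d'(L, F)$. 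It then suffices to prove $d'(L, U(\mathfrak g)) = d'(L, F)$.

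The inequality $d'(L, U(\mathfrak g)) \leqslant d'(L, F)$ should be essentially immediate: any family of $U(\mathfrak g)$-invariant ideals $I_k \supseteq J_k$ satisfying Conditions~1 and~2$'$ for $H = U(\mathfrak g)$ is in particular a family of ideals, and the non-vanishing condition~2$'$ only becomes easier to satisfy when we drop the $H$-invariance requirement on the $T_k$ (in the $H=F$ case the $T_k$ need only be $(\ad B)\oplus\tilde A_0$-submodules); irreducibility as an $(H,L)$-module is a stronger condition than irreducibility as an $L$-module, so the dimension $\dim\bigl(L/(\Ann(I_1/J_1)\cap\dots\cap\Ann(I_r/J_r))\bigr)$ ranges over a subset of the values allowed for $d'(L, F)$. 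Hence $d'(L, U(\mathfrak g)) \leqslant d'(L, F)$. For the reverse inequality, I would take ideals $I_k \supseteq J_k$ and submodules $T_k$ realizing $d'(L, F)$, and show they can be replaced by $U(\mathfrak g)$-invariant data realizing at least the same dimension. This is where the semisimplicity of $\mathfrak g$ enters decisively.

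The main obstacle — and the heart of the proof — is upgrading an ordinary ideal $I$ with $I/J$ $L$-irreducible to a $\mathfrak g$-invariant ideal without decreasing the relevant dimension. Here I would use that $\mathfrak g$ acts on $L$ by derivations, so $\mathfrak g$ acts on the (finite-dimensional) lattice of ideals and on subquotients; because $\mathfrak g$ is semisimple and we are in characteristic $0$, every $\mathfrak g$-module is completely reducible, and moreover a derivation action is compatible with the Lie bracket in a way that lets us average: the sum $\tilde I := \sum_{x \in \mathfrak g} x\cdot I$ (more precisely the $U(\mathfrak g)$-submodule generated by $I$) is again an ideal of $L$, and similarly for $\tilde J$. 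One then argues, using complete reducibility and the irreducibility of $I/J$, that the isotypic/simple-component structure of $L/(\Ann(\tilde I_1/\tilde J_1)\cap\dots)$ is at least as large as that for the original data; the role of Condition~2$'$ (with the $(\ad B)\oplus\tilde A_0$-submodules rather than $B$-submodules) is precisely that it survives this averaging, because $(\ad B)\oplus\tilde A_0$ is itself built from $H$-invariant pieces (as established in Lemmas~\ref{LemmaA0DirectSumOfFields} and~\ref{LemmaLHBA0ComplReducible}). Concretely I would verify: (i) $\tilde I_k/\tilde J_k$ remains an irreducible $(H,L)$-module after possibly refining to a simple $L$-constituent on which $\mathfrak g$ acts, using that derivations of a simple $L$-module that preserve the $L$-action either vanish or are captured inside $\ad L$ up to the radical; (ii) the complementary submodules $T_k$ can be chosen $H$-invariant by complete reducibility of $L$ as an $(H, (\ad B)\oplus\tilde A_0)$-module (Lemma~\ref{LemmaLHBA0ComplReducible}); (iii) the nonzero commutator in~2$'$ is not destroyed, since it only involved the $T_k$ and $L$ itself, both $H$-invariant.

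Assuming these points, we conclude $d'(L, U(\mathfrak g)) \geqslant d'(L, F)$, hence equality, and therefore $\PIexp^{U(\mathfrak g)}(L) = d'(L, U(\mathfrak g)) = d'(L, F) = \PIexp(L)$, which is the claim. I expect step~(i) — controlling how $\mathfrak g$ acts on the $L$-irreducible subquotients and ensuring the $H$-irreducible refinement still contributes the full dimension to the annihilator quotient — to be the genuinely delicate part, and this is exactly why the \emph{modified} formula (Condition~2$'$ in place of Condition~2) is needed: it relaxes the invariance demanded of the $T_k$ just enough that the averaging argument goes through while the original formula's Condition~2 might fail to be preserved.
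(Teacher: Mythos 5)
Your proposal has the two directions exactly backwards, and the one you dismiss as ``essentially immediate'' is the entire content of the theorem. The inequality you call easy, $d'(L,U(\mathfrak g))\leqslant d'(L,F)$, rests on the claim that ``irreducibility as an $(H,L)$-module is a stronger condition than irreducibility as an $L$-module.'' This is reversed: an $(H,L)$-submodule must be invariant under \emph{more} operators, so $L$-irreducibility implies $(H,L)$-irreducibility and not conversely. Consequently a family $I_k\supseteq J_k$ admissible for $H=U(\mathfrak g)$ need not be admissible for $H=F$: the quotients $I_k/J_k$, irreducible as $(U(\mathfrak g),L)$-modules, may decompose as $L$-modules, so Condition~1 can fail for $H=F$. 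This is precisely the difficulty the paper resolves: it decomposes each $T_k$ into irreducible $(\ad B)\oplus\tilde A_0$-submodules $T_{kj}$ (possible by Lemma~\ref{LemmaLHBA0ComplReducible}), picks a single summand $T_{kj_k}$ preserving the non-vanishing bracket of Condition~2$'$ (this is where the existential form of the modified condition is really used), sets $\tilde I_k=T_{kj_k}\oplus J_k$, and then proves --- via E.~Cartan's theorem applied to the image of $L+\mathfrak g$ in $\mathfrak{gl}(I_k/J_k)$, which forces $\tilde A_0$ and the solvable part to act by scalar operators --- that $\tilde I_k$ is an ideal with $\tilde I_k/J_k$ irreducible as an $L$-module and, crucially, that $\Ann(\tilde I_k/J_k)=\Ann(I_k/J_k)$, so no dimension is lost in the annihilator quotient. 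None of this appears in your sketch.

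Conversely, the direction you treat as the heart of the proof, $d'(L,F)\leqslant d'(L,U(\mathfrak g))$, i.e.\ $\PIexp(L)\leqslant\PIexp^{U(\mathfrak g)}(L)$, needs no averaging or upgrading of ideals at all: a multilinear ordinary Lie polynomial that is not an ordinary identity is a fortiori not a differential identity, so $c_n(L)\leqslant c_n^{U(\mathfrak g)}(L)$ for all $n$ and the inequality of exponents is immediate. Your proposed construction $\tilde I:=U(\mathfrak g)\cdot I$ for this direction is therefore unnecessary, and as stated it would in any case not obviously preserve irreducibility of the quotient or the annihilator dimension. As written, the proposal does not establish the theorem: the genuinely delicate step --- the annihilator-preserving refinement of $(U(\mathfrak g),L)$-irreducible quotients to $L$-irreducible ones --- is missing.
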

\begin{remark}
If a reductive affine algebraic group $G$ is rationally acting on $L$ by automorphisms,
then $L$ is an $FG$-nice algebra~\cite[Example~6]{ASGordienko5}. 
Hence if $G$ is connected and $\mathfrak g$ is the Lie algebra of $G$, then by~\cite[Theorems 13.1 and 13.2]{HumphreysAlgGr}, $L$ is an $U(\mathfrak g)$-nice algebra.
In particular, a finite dimensional Lie algebra $L$ with an action of a
finite dimensional semisimple Lie algebra~$\mathfrak g$ by derivations is always an $U(\mathfrak g)$-nice algebra, since there exists a simply connected semisimple affine algebraic group $G$ rationally acting on $L$ by automorphisms, such that $\mathfrak g$ is the Lie algebra of $G$
and the $\mathfrak g$-action is the differential of the $G$-action (see e.g.~\cite[Chapter XVIII, Theorem 5.1]{Hochschild} and~\cite[Theorem~3]{GordienkoKochetov}). 
\end{remark}
\begin{proof}[Proof of Theorem~\ref{TheoremddprimeDiff}]
By Theorems~\ref{TheoremMainH} and~\ref{Theoremddprime}, there exist $\PIexp(L)=d'(L, F)$
and $\PIexp^{U(\mathfrak g)}(L)=d'(L, U(\mathfrak g))$.
If we treat differential and ordinary multilinear Lie polynomials as 
multilinear functions  on $L$, we obtain $c_n(L) \leqslant c^{U(\mathfrak g)}_n(L)$ for all $n \in\mathbb N$.
Hence $\PIexp(L) \leqslant \PIexp^{U(\mathfrak g)}(L)$.

 Suppose $\mathfrak g$-invariant ideals $I_1, I_2, \ldots, I_r$,
$J_1, J_2, \ldots, J_r$, $r \in \mathbb Z_+$, of the algebra $L$ such that $J_k \subseteq I_k$,
satisfy Conditions 1 and 2' for $H=U(\mathfrak g)$. 
By Condition 2', there exist $\mathfrak g$-invariant $(\ad B)\oplus \tilde A_0$-submodules $T_k$, $I_k = J_k\oplus T_k$, and numbers $q_i \geqslant 0$ such that $$\bigl[[T_1, \underbrace{L, \ldots, L}_{q_1}], [T_2, \underbrace{L, \ldots, L}_{q_2}], \ldots, [T_r,
 \underbrace{L, \ldots, L}_{q_r}]\bigr] \ne 0.$$
 By Lemma~\ref{LemmaLHBA0ComplReducible}, $L$ is a completely reducible $(\ad B)\oplus \tilde A_0$-module.
 Hence $T_k=T_{k1}\oplus T_{k2}\oplus \ldots \oplus T_{kn_k}$
 for some irreducible $(\ad B)\oplus \tilde A_0$-submodules $T_{kj}$.
Therefore we can choose $1 \leqslant j_k \leqslant n_k$ such that
$$\bigl[[T_{1j_1}, \underbrace{L, \ldots, L}_{q_1}], [T_{2j_2}, \underbrace{L, \ldots, L}_{q_2}], \ldots, [T_{rj_r},
 \underbrace{L, \ldots, L}_{q_r}]\bigr] \ne 0.$$
 Let $\tilde I_k = T_{kj_k}\oplus J_k$.
 
 We claim that $\tilde I_k$ is an ideal in $L$ and $\Ann(\tilde I_k / J_k)=\Ann(I_k/J_k)$
 for all $1 \leqslant k \leqslant r$.
  Denote by $L_0$, $B_0$, $R_0$, $\mathfrak g_0$, respectively, the images
  of $L$, $B$, $R$, $\mathfrak g$ in $\mathfrak{gl}(I_k/J_k)$.
 Note that $B_0$ and $R_0$
 are, respectively, semisimple and solvable. Hence $L_0=B_0\oplus R_0$
 (direct sum of $\mathfrak g$-submodules) where $\mathfrak g$-action  
 on $\mathfrak{gl}(I_k/J_k)$ is induced from the $\mathfrak g$-action
 on $I_k/J_k$ and corresponds to the adjoint action of $\mathfrak g_0$
 on $\mathfrak{gl}(I_k/J_k)$. 
 In particular, $R_0$ is a solvable ideal of $(L_0+\mathfrak g_0)$
 and $B_0$ is an ideal
 of $(B_0 + \mathfrak g_0)$.
 Note that $I_k/J_k$ is an irreducible $(L_0+\mathfrak g_0)$-module. By~E.~Cartan's theorem~\cite[Proposition~1.4.11]{GotoGrosshans}, $L_0+\mathfrak g_0 = B_1
 \oplus R_1$ (direct sum of ideals) where $B_1$ is semisimple and $R_1$ is either zero
or equal to the center $Z(\mathfrak{gl}(I_k/J_k))$ consisting of scalar operators.
Considering the resulting projection $(L_0+\mathfrak g_0) \to R_1$,
we obtain $B_0 \subseteq B_1$. Since $R_0 \subseteq R_1$ consists of scalar operators,
$B_0$ is an ideal of $(L_0+\mathfrak g_0)$ and $B_1$.

    Since $\tilde I_k/J_k$ is an irreducible $(\ad B)\oplus \tilde A_0$-module and  $\tilde A_0$ is acting on $I_k/J_k$ by scalar operators,
  $\tilde I_k/J_k$ is an irreducible $B_0$- and $L$-module. In particular, $\tilde I_k$ is an ideal.
  
  If $\Ann(\tilde I_k/J_k)\ne \Ann(I_k/J_k)$, then $a\tilde I_k/J_k=0$
 for some $a \in L_0 \cong L/\Ann(I_k/J_k)$, $a\ne 0$.
 Let $\varphi \colon L_0 \to \mathfrak{gl}(\tilde I_{k}/J_k)$
be the corresponding action and $a = b + c$
 where $b\in B_0$, $c \in R_0$. 
 Then $\varphi(b)=-\varphi(c)$ is a scalar operator on $\tilde I_{k}/J_k$.
 Hence $\varphi(b)$ belongs to the center of the semisimple
 algebra $\varphi(B_0)$. Thus $\varphi(b)=\varphi(c)=0$, $b\ne 0$.
 Recall that  $B_1$ is a semisimple algebra.
 Therefore $B_1 =  B_0 \oplus B_2$ (direct sum of ideals)
 for some $B_2$. Since $R_1$ consists of scalar operators, $I_k/J_k$ is an irreducible $B_1$-module and
 we have $$I_k/J_k =  \sum_{\substack{a_i \in B_2,\\ \alpha\in\mathbb Z_+}} a_1 \ldots a_\alpha\ \tilde I_k/J_k.$$
 Now $[b, B_2]=0$ and $b\tilde I_{k}/J_k=0$  implies $bI_k/J_k=0$ and $b=0$. We get a contradiction.
  Hence $\Ann(\tilde I_k/J_k)=\Ann(I_k/J_k)$.
  
  Note that $\tilde I_1, \tilde I_2, \ldots, \tilde I_r$,
$J_1, J_2, \ldots, J_r$ satisfy Conditions 1 and 2' for $H=F$, i.e. for the case of ordinary
polynomial identities. Moreover, $$\dim \frac{L}{\Ann(I_1/J_1) \cap \dots \cap \Ann(I_r/J_r)}
= \dim \frac{L}{\Ann(\tilde I_1/J_1) \cap \dots \cap \Ann(\tilde I_r/J_r)}.$$
Hence $\PIexp^{U(\mathfrak g)}(L)=\PIexp(L)$.
\end{proof}

Analogs for associative algebras of Theorems~\ref{TheoremLieDerPIexpEqual} and~\ref{TheoremLieGConnPIexpEqual} below
 were proved in~\cite[Theorems 15 and 16]{GordienkoKochetov}. 

 \begin{theorem}\label{TheoremLieDerPIexpEqual}
Let $L$ be a finite dimensional Lie algebra
over a field $F$ of characteristic $0$. Suppose a finite dimensional
semisimple Lie algebra $\mathfrak g$ acts on $L$ by derivations.
Then $\PIexp^{U(\mathfrak g)}(L)=\PIexp(L)$.
\end{theorem}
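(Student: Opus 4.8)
The plan is to reduce Theorem~\ref{TheoremLieDerPIexpEqual} to Theorem~\ref{TheoremddprimeDiff} by the standard technique of extending the base field. The point is that Theorem~\ref{TheoremddprimeDiff} already gives the equality $\PIexp(L)=\PIexp^{U(\mathfrak g)}(L)$ over an algebraically closed field, provided $L$ is $U(\mathfrak g)$-nice, and the remark following Theorem~\ref{TheoremddprimeDiff} records that a finite dimensional Lie algebra with an action of a finite dimensional semisimple Lie algebra by derivations is automatically $U(\mathfrak g)$-nice. So the only work is to remove the hypothesis that $F$ be algebraically closed.

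First I would fix an algebraic closure $\bar F \supseteq F$ and set $\bar L := L \otimes_F \bar F$, which is again a finite dimensional Lie algebra, now over $\bar F$. The $\mathfrak g$-action on $L$ extends by $\bar F$-linearity to an action of $\bar{\mathfrak g} := \mathfrak g \otimes_F \bar F$ on $\bar L$ by derivations; since $\mathfrak g$ is semisimple, $\bar{\mathfrak g}$ is a finite dimensional semisimple Lie algebra over $\bar F$, so by the remark $\bar L$ is $U(\bar{\mathfrak g})$-nice and Theorem~\ref{TheoremddprimeDiff} applies to give $\PIexp(\bar L)=\PIexp^{U(\bar{\mathfrak g})}(\bar L)$. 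It then remains to check the two "invariance under scalar extension" statements $c_n(L)=c_n(\bar L)$ and $c^{U(\mathfrak g)}_n(L)=c^{U(\bar{\mathfrak g})}_n(\bar L)$ for all $n$, which immediately yield $\PIexp(L)=\PIexp(\bar L)=\PIexp^{U(\bar{\mathfrak g})}(\bar L)=\PIexp^{U(\mathfrak g)}(L)$.

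For the ordinary codimensions $c_n(L)=c_n(\bar L)$ this is classical: $V_n \cap \Id(L)$ is the kernel of the evaluation map $V_n \to \Hom_F(L^{\otimes n}, L)$ (restricted to multilinear maps), and tensoring this exact sequence with $\bar F$, which is flat, identifies $V_n(\bar L)\cap\Id(\bar L)$ with $(V_n\cap\Id(L))\otimes_F\bar F$, so the codimensions agree. For the differential codimensions the same argument works once one picks an $F$-basis $(\gamma_\beta)$ of $U(\mathfrak g)$ (say the Poincaré–Birkhoff–Witt basis); it is simultaneously a $\bar F$-basis of $U(\bar{\mathfrak g})=U(\mathfrak g)\otimes_F\bar F$, so $V^{U(\mathfrak g)}_n$ and $V^{U(\bar{\mathfrak g})}_n$ have the "same" spanning set of monomials $[x^{\gamma_{\beta_1}}_{\sigma(1)},\dots,x^{\gamma_{\beta_n}}_{\sigma(n)}]$, and $V^{U(\mathfrak g)}_n\cap\Id^{U(\mathfrak g)}(L)$ is again the kernel of evaluation into the space of multilinear maps $L^{\otimes n}\to L$; flatness of $\bar F$ over $F$ gives the equality of dimensions. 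I expect the main (though still routine) obstacle to be bookkeeping here: one must be a little careful that the $\bar F$-linear extension of a derivation of $L$ really is a derivation of $\bar L$ and that every element of $\bar{\mathfrak g}$ acts this way, and that the evaluation of a differential polynomial on elements of $L\subseteq\bar L$ genuinely detects $\Id^{U(\mathfrak g)}(L)$ — but all of this is formal. Finally I would remark that since $\PIexp^{U(\bar{\mathfrak g})}(\bar L)\in\mathbb Z_+$ by Theorem~\ref{TheoremddprimeDiff}, the limit defining $\PIexp^{U(\mathfrak g)}(L)$ exists as well, completing the proof.
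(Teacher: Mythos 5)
Your proposal is correct and follows essentially the same route as the paper: reduce to the algebraically closed case via the invariance of (ordinary and differential) codimensions under scalar extension, then invoke Theorem~\ref{TheoremddprimeDiff} together with the remark that a finite dimensional Lie algebra with a semisimple Lie algebra acting by derivations is automatically $U(\mathfrak g)$-nice. The paper merely cites the scalar-extension step as analogous to the classical associative and Lie cases, whereas you spell out the flatness/evaluation-map argument, but the content is the same.
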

\begin{proof} 
$H$-codimensions do not change upon an extension of the base field.
The proof is analogous to the cases of ordinary codimensions of
associative~\cite[Theorem~4.1.9]{ZaiGia} and
Lie algebras~\cite[Section~2]{ZaiLie}.
Thus without loss of generality we may assume
 $F$ to be algebraically closed.
  Now we use Theorem~\ref{TheoremddprimeDiff} and the remark after it.
\end{proof}

\begin{remark}
Theorem~\ref{TheoremLieDerPIexpEqual} implies similar asymptotic 
behavior of ordinary and differential codimensions, however the codimensions themselves may be different.
Consider the adjoint action of $\mathfrak{sl}_2(F)$ on itself. Then $c_1(\mathfrak{sl}_2(F))=1
< c_1^{U(\mathfrak{sl}_2(F))}(\mathfrak{sl}_2(F))$
since $x_1^{e_{11}-e_{22}}$ and $x_1^{e_{12}}$ are linearly independent modulo $\Id^{U(\mathfrak{sl}_2(F))}(\mathfrak{sl}_2(F))$.
\end{remark}

\begin{theorem}\label{TheoremLieGConnPIexpEqual}
Let $L$ be a finite dimensional Lie algebra
over an algebraically closed field $F$ of characteristic $0$. Suppose a connected reductive
affine algebraic group $G$ is rationally acting on $L$ by automorphisms.
Then $\PIexp^{G}(L)=\PIexp(L)$.
\end{theorem}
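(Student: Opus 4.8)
The plan is to reduce the statement to the differential case, which is already settled by Theorem~\ref{TheoremddprimeDiff}. Let $\mathfrak{g}$ be the Lie algebra of $G$. Differentiating the rational action of $G$ on $L$ by automorphisms produces an action of $\mathfrak{g}$ on $L$ by derivations, and by the remark following Theorem~\ref{TheoremddprimeDiff} (here we use that $G$ is connected and $\ch F = 0$) the algebra $L$ is $U(\mathfrak{g})$-nice, so that theorem gives $\PIexp^{U(\mathfrak{g})}(L) = \PIexp(L)$. Thus it remains to prove $\PIexp^{G}(L) = \PIexp^{U(\mathfrak{g})}(L)$, and I would deduce this from the stronger fact that $c^{G}_n(L) = c^{U(\mathfrak{g})}_n(L)$ for every $n$.

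The crux is a comparison of the enveloping algebras of the two actions. Let $A_{G} \subseteq \End_F(L)$ be the linear span of the operators $g \in G$ acting on $L$ --- an associative subalgebra, since $G$ is closed under multiplication, and exactly the image of $FG$ in $\End_F(L)$ --- and let $A_{\mathfrak{g}} \subseteq \End_F(L)$ be the subalgebra generated by $\mathfrak{g}$, i.e. the image of $U(\mathfrak{g})$. Since $G$ is reductive and $\ch F = 0$, the rational $G$-module $L$ is completely reducible; since $G$ is connected, its $\mathfrak{g}$-submodules coincide with its $G$-submodules, so $L$ is completely reducible over $\mathfrak{g}$ as well and the $G$- and $\mathfrak{g}$-isotypic decompositions of $L$ agree. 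Writing the common decomposition as $L = \bigoplus_\lambda V_\lambda \otimes M_\lambda$ with $V_\lambda$ simple, we get $A_{G} = \bigoplus_\lambda \End_F(V_\lambda) \otimes \id_{M_\lambda} = A_{\mathfrak{g}}$; equivalently, by the Jacobson density theorem both $A_{G}$ and $A_{\mathfrak{g}}$ equal the commutant of $\End_{G}(L) = \End_{\mathfrak{g}}(L)$ in $\End_F(L)$.

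Finally, by the very definition of $H$-identities, evaluation of multilinear Lie $H$-polynomials on $L$ identifies $V^{H}_n/(V^{H}_n \cap \Id^{H}(L))$ with the subspace of the space of multilinear maps $L^n \to L$ spanned by the maps $(a_1,\dots,a_n) \mapsto [\phi_1 a_{\sigma(1)}, \phi_2 a_{\sigma(2)}, \dots, \phi_n a_{\sigma(n)}]$, where $\sigma$ runs over $S_n$ and each $\phi_i$ over the image of $H$ in $\End_F(L)$. This image is $A_{G}$ for $H = FG$ and $A_{\mathfrak{g}}$ for $H = U(\mathfrak{g})$; since $A_{G} = A_{\mathfrak{g}}$, the two subspaces coincide, hence $c^{G}_n(L) = c^{U(\mathfrak{g})}_n(L)$ for all $n$, and therefore $\PIexp^{G}(L) = \PIexp^{U(\mathfrak{g})}(L) = \PIexp(L)$. (If $L$ is nilpotent all three exponents are $0$, so there is no loss of generality in assuming $L$ non-nilpotent whenever Theorem~\ref{TheoremMainH} is invoked for the existence of the limits.)

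I expect the identity $A_{G} = A_{\mathfrak{g}}$ to be the only genuine obstacle; the rest is bookkeeping with the definition of $H$-codimensions. Its two ingredients --- complete reducibility of $L$ as a $\mathfrak{g}$-module and the coincidence of $G$-invariant and $\mathfrak{g}$-invariant subspaces for a connected $G$ in characteristic $0$ --- are classical but deserve a careful statement, being in effect the content of the theorems of Humphreys cited in the remark after Theorem~\ref{TheoremddprimeDiff}.
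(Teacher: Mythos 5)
Your proof is correct and follows essentially the same route as the paper: differentiate the rational $G$-action to a $\mathfrak g$-action, observe that $L$ is $U(\mathfrak g)$-nice, show $c_n^G(L)=c_n^{U(\mathfrak g)}(L)$ for all $n$, and conclude from Theorem~\ref{TheoremddprimeDiff}. The only difference is that the paper obtains the equality of codimensions by citing \cite[Lemma~5]{GordienkoKochetov}, whereas you reprove it via the (sound) bicommutant argument that for a connected reductive $G$ in characteristic $0$ the images of $FG$ and of $U(\mathfrak g)$ in $\End_F(L)$ coincide, together with the observation that $H$-codimensions depend only on that image.
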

\begin{proof}
Note that the Lie algebra $\mathfrak g$ of the group $G$ is acting on $L$ by derivations.
By~\cite[Lemma~5]{GordienkoKochetov}, $c_n^{U(\mathfrak g)}(L)=c_n^{G}(L)$ for all $n\in\mathbb N$.
Hence Theorem~\ref{TheoremddprimeDiff} implies $\PIexp^{G}(L)=\PIexp(L)$.
\end{proof}

\begin{remark} In Theorem~\ref{TheoremLieGConnPIexpEqual} one could consider the case
when $G$ is acting by anti-automorphisms too. However, in this case $G = G_0 \cup G_1$, $G_0 \cap G_1 = \varnothing$, where the elements of $G_0$ are acting on $L$ by automorphisms and the elements
of $G_1$ are acting by anti-automorphisms. Since $G_0$ and $G_1$ are defined by polynomial
equations, they are closed subsets in $G$. Recall that $G$ is connected.
Therefore $G_1 = \varnothing$ and $G$ must act by automorphisms only.
\end{remark}

\section{Lie algebras with $R=N$}\label{SectionHRTheSame}

\subsection{Formulation of the theorem}

If the solvable radical of an $H$-module Lie algebra $L$ is nilpotent, we 
do not require from $L$ to satisfy Conditions~\ref{ConditionLevi}--\ref{ConditionLComplHred} in the definition of an $H$-nice algebra (see Subsection~\ref{SubsectionHnice}). Moreover, the formula for the Hopf PI-exponent is simpler, than in the general case (Subsections~\ref{SubsectionOrigFormulaHPIexp} and~\ref{SubsectionModificationHPIexp}).

\begin{theorem}\label{TheoremMainNRSame} Let $L$ be a finite dimensional non-nilpotent
$H$-module Lie algebra 
where $H$ is a Hopf algebra over a field $F$ of characteristic $0$.
Suppose that the solvable radical of $L$ coincides with the nilpotent
radical $N$ of $L$ and $N$ is an $H$-submodule.
Then there exist constants $d\in\mathbb N$, $C_1, C_2 > 0$, $r_1, r_2 \in \mathbb R$ such that $$C_1 n^{r_1} d^n \leqslant c^{H}_n(L) \leqslant C_2 n^{r_2} d^n\text{ for all }n \in \mathbb N.$$
Moreover, if $F$ is algebraically closed, the constant $d$ is defined as follows.
Let $$L/N = B_1 \oplus \ldots \oplus B_q \text{ (direct sum of $H$-invariant ideals)}$$
where $B_i$ are $H$-simple Lie algebras and let $\varkappa \colon L/N \to L$
be any homomorphism of algebras (not necessarily $H$-linear) such that $\pi\varkappa = \id_{L/N}$ where $\pi \colon L \to L/N$ is the natural projection. 
 Then $$d= \max\left( B_{i_1}\oplus B_{i_2} \oplus \ldots \oplus B_{i_r}
 \mathbin{\Bigl|}  r \geqslant 1,\ \bigl[[ H\varkappa(B_{i_1}), \underbrace{L, \ldots, L}_{q_1}], \right.$$ \begin{equation}\label{EqdRNSame}\left. [  H\varkappa(B_{i_2}), \underbrace{L, \ldots, L}_{q_2}], \ldots, [ H\varkappa(B_{i_r}),
 \underbrace{L, \ldots, L}_{q_r}]\bigr] \ne 0 \text{ for some } q_i \geqslant 0 \right).\end{equation}
\end{theorem}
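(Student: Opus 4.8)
The plan is to deduce Theorem~\ref{TheoremMainNRSame} from the machinery already set up for $H$-nice algebras, by showing that an $H$-module Lie algebra $L$ with $R=N$ and $N$ an $H$-submodule can be treated as if it were $H$-nice, and that the quantity $d'(L,H)$ of Subsection~\ref{SubsectionModificationHPIexp} collapses to the explicit formula~(\ref{EqdRNSame}). First I would reduce to the algebraically closed case: $H$-codimensions are unchanged under extension of the base field, exactly as in the proof of Theorem~\ref{TheoremLieDerPIexpEqual}, so assume $F=\bar F$. The key structural point is that when $R=N$, the ideals $I_1,\dots,I_r$, $J_1,\dots,J_r$ of Conditions~1--2 can all be taken with $J_k\subseteq N$ and $I_k/J_k$ inside $(B\oplus N)/J_k$ lying, modulo $N$, in a single $H$-simple summand $B_{i_k}$; indeed an irreducible $(H,L)$-module on which the nilpotent ideal $N$ does \emph{not} act trivially forces a nonzero contribution of $N$ which, being nilpotent, cannot support an irreducible faithful module, so $N$ annihilates $I_k/J_k$ and $I_k/J_k$ is an irreducible module over the semisimple $L/N=B_1\oplus\cdots\oplus B_q$, hence over a single $B_{i_k}$. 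Consequently $\dim L/(\bigcap_k \Ann(I_k/J_k))$ equals $\sum$ of the $\dim B_{i_k}$ appearing, i.e. $\dim(B_{i_1}\oplus\cdots\oplus B_{i_r})$ — which is precisely what~(\ref{EqdRNSame}) maximizes.

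Next I would match Condition~2' with the nonvanishing condition in~(\ref{EqdRNSame}). Here I note that with $R=N$ one has $S=0$ in the decomposition $L=B\oplus S\oplus N$ from~\cite[Lemma~10]{ASGordienko5}, so $A_0=\tilde A_0=0$ and the auxiliary algebra $(\ad B)\oplus\tilde A_0$ is just $\ad B$; moreover we need no Levi decomposition hypothesis because we are allowed an arbitrary, not necessarily $H$-linear, section $\varkappa\colon L/N\to L$, and $H\varkappa(B_i)$ is an $H$-submodule of $L$ whose image in $L/N$ is $B_i$. The complete reducibility input of Lemma~\ref{LemmaLHBA0ComplReducible} is replaced by the elementary fact that $L/N$ is semisimple and acts on itself by the adjoint action completely reducibly, which is all that is used when one runs the argument of~\cite[Section~6]{ASGordienko5}. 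For the lower bound one takes the $(B_{i_1},\dots,B_{i_r})$ achieving the maximum, sets $I_k=H\varkappa(B_{i_k})+N$ and chooses $J_k\subseteq N$ so that $I_k/J_k$ is irreducible with $\Ann(I_k/J_k)=\bigoplus_{j\ne i_k}B_j \pmod N$ (together with the part of $N$ acting trivially), and feeds the $\tilde T_k=$ the $\ad B$-submodule complement lying in $H\varkappa(B_{i_k})$ into~\cite[Lemma~17]{ASGordienko5}; the nonvanishing of the long commutator in~(\ref{EqdRNSame}) is exactly the hypothesis that makes that lemma applicable. For the upper bound one runs the general upper-bound argument of~\cite[Section~5]{ASGordienko5}, observing that all the decompositions used there are available since only $H$-invariance of $N$ (not of a Levi factor) is needed: $L/N$ is semisimple, $N$ is a nilpotent $H$-ideal, and the associative envelope arguments go through verbatim.

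Finally, assembling the two bounds gives constants $C_1,C_2>0$, $r_1,r_2\in\mathbb R$ with $C_1 n^{r_1}d^n\le c_n^H(L)\le C_2 n^{r_2}d^n$ where $d$ is the common value from~(\ref{EqdRNSame}); in particular $\PIexp^H(L)=d\in\mathbb N$, and since $L$ is non-nilpotent some nonempty family of $B_{i}$'s satisfies the nonvanishing condition (e.g. a single $B_i$ with $[B_i,B_i]\ne 0$, after pushing forward through $\varkappa$ and using that $\varkappa$ is an algebra homomorphism), so $d\ge 1$. I expect the main obstacle to be the bookkeeping in the claim that one may restrict to ideals with $J_k\subseteq N$ and $I_k/J_k$ lying over a single $H$-simple block: one has to argue carefully, using nilpotency of $N$ and Lie's theorem (as in Lemma~\ref{LemmaA0DirectSumOfFields}), that $N$ acts nilpotently on any such subquotient and hence trivially on an irreducible one, and that this does not decrease the dimension of $L/\bigcap_k\Ann(I_k/J_k)$ — i.e. that replacing a general admissible family by one of this normalized form can only increase the maximum. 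A secondary technical point is verifying that the non-$H$-linear section $\varkappa$ suffices in place of an $H$-invariant Levi subalgebra throughout the argument of~\cite[Section~6]{ASGordienko5}; this works because that argument only ever uses $\varkappa(B_i)$ up to adding elements of $N$ and then applies $H$ to the result, and $H\varkappa(B_i)+N$ is independent of the choice of $\varkappa$.
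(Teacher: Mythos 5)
Your overall strategy---deducing the theorem from the $H$-nice machinery of Theorems~\ref{TheoremMainH} and~\ref{Theoremddprime} by showing that $d'(L,H)$ collapses to~(\ref{EqdRNSame})---has a genuine gap: an algebra with $R=N$ and $N$ $H$-invariant need \emph{not} be $H$-nice, and the entire point of this theorem is to dispense with Conditions~\ref{ConditionLevi}--\ref{ConditionLComplHred} (see Example~\ref{ExampleGnoninvLevi}, which has no $G$-invariant Levi decomposition, so Theorems~\ref{TheoremMainH} and~\ref{Theoremddprime} simply do not apply to it). Concretely, your lower-bound step needs a complement $\tilde T_k$ of $J_k$ in $I_k=\pi^{-1}(B_{i_k})$ that is simultaneously $H$-invariant and an $(\ad B)$-submodule; such a $T_k$ is precisely an $H$-invariant Levi-type complement and does not exist in general (in Example~\ref{ExampleGnoninvLevi} the only $\ad$-invariant complement of $N$ is $\mathfrak{sl}_m\times 0$, which is not $\varphi$-invariant). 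So Condition~2' cannot be fed into~\cite[Lemma~17]{ASGordienko5} as you propose, and $d'(L,H)$ is not even defined outside the $H$-nice setting, since its definition presupposes an $H$-invariant $B$. Similarly, your upper bound asserts that the argument of~\cite[Section~5]{ASGordienko5} goes through ``verbatim'' using only the $H$-invariance of $N$; that argument is proved only under the $H$-nice hypotheses, and you give no reason it survives without them.

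The paper's actual proof is direct and bypasses this machinery. For the upper bound, Lemma~\ref{LemmaNRSameUpperCochar} shows $m(L,H,\lambda)=0$ whenever $\sum_{k>d}\lambda_k\geqslant p$ (where $N^p=0$), by evaluating $e^*_{T_\lambda}f$ on a basis adapted to $\varkappa(B_1),\ldots,\varkappa(B_q),N$: the definition of $d$ bounds how many directions from the $\varkappa(B_i)$ can appear in a nonzero evaluation, the column alternations force the remaining entries to be distinct basis elements of $N$, and $N^p=0$ kills the result; Lemma~\ref{LemmaNRSameUpper} then converts this into the codimension bound. For the lower bound, the key device you are missing is the saturation trick of Lemma~\ref{LemmaHRSameLowerPolynomial}: since $h\varkappa(a)-\varkappa(ha)\in N$, one first pads the nonzero commutator with as many elements of $N$ as possible so that any further contribution from $N$ annihilates it (condition~(\ref{Eqbazero})); only after this does the non-$H$-linear $\varkappa$ ``behave like a homomorphism of $H$-modules'' inside that specific expression, which is what allows the multialternating polynomials of~\cite[Theorem~11]{ASGordienko5} for the $H$-simple blocks $B_i$, together with the Density Theorem, to be assembled into a nonidentity with $2k$ alternating sets of size $d$. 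Your closing remark that ``$H\varkappa(B_i)+N$ is independent of $\varkappa$'' points in this direction but does not by itself control the resulting $N$-error terms.
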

\begin{remark}
If $L$ is nilpotent, i.e. $[x_1, \ldots, x_p]\equiv 0$ for some $p\in\mathbb N$, then
$V^{H}_n \subseteq \Id^{H}(L)$ and $c^H_n(L)=0$ for all $n \geqslant p$.
\end{remark}

Theorem~\ref{TheoremMainNRSame} will be proved at the end of Subsection~\ref{SubsectionNRSameLowerBound}.

\begin{corollary}
The analog of Amitsur's conjecture holds
 for such codimensions.
\end{corollary}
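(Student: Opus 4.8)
The plan is to prove Theorem~\ref{TheoremMainNRSame} and then observe that the corollary is an immediate consequence of the two-sided estimate $C_1 n^{r_1} d^n \leqslant c^H_n(L) \leqslant C_2 n^{r_2} d^n$: taking $n$th roots and letting $n \to \infty$ forces $\sqrt[n]{c^H_n(L)} \to d$, so $\PIexp^H(L)$ exists and equals the integer $d$, which is exactly the assertion of the Amitsur-type conjecture. (If $L$ is nilpotent, $c^H_n(L) = 0$ eventually and the limit is $0 \in \mathbb{Z}_+$, handled by the remark.) The base field is reduced to the algebraically closed case first: $H$-codimensions are unchanged under field extension (as already used in the proof of Theorem~\ref{TheoremLieDerPIexpEqual}), and $d$ is defined intrinsically via the $H$-simple decomposition of $L/N$, which is stable under extension of scalars; so it suffices to prove both bounds over $\bar F$.

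For the upper bound I would reduce to the framework of Theorem~\ref{TheoremddprimeDiff}/Theorem~\ref{TheoremMainH}. The key point is that when the solvable radical equals the nilpotent radical $N$ and $N$ is $H$-invariant, the hard structural Conditions~\ref{ConditionLevi}--\ref{ConditionLComplHred} of $H$-niceness are no longer needed: the Levi factor can be taken as any (not necessarily $H$-invariant) vector-space complement $\varkappa(L/N)$, and the point of the formula becomes that the contributions of the radical are negligible in the exponent because $\ad N$ consists of nilpotent operators (so the associated subalgebra $A_0$ generated by $\ad S$ lies inside $J(A)$ entirely, there being no semisimple part $S$ when $R = N$, i.e.\ $S = 0$, $\tilde A_0 = 0$). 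Concretely I expect one shows $c^H_n(L) \leqslant c^H_n(L/\Ann(W))$ up to a polynomial factor, where $W$ is the sum of the relevant irreducible subquotients, and then bounds the latter by $d^n$ using that on $L/\Ann(W)$ the radical acts nilpotently and the semisimple part acts with image of dimension controlling the exponent — this is the standard "sandwich" argument from~\cite{ZaiLie, ASGordienko5}, now substantially simplified because there is no Wedderburn--Mal'cev splitting to track inside the radical.

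For the lower bound I would imitate Section~6 of~\cite{ASGordienko5} (as in the proof of Theorem~\ref{Theoremddprime}): choose $B_{i_1}, \ldots, B_{i_r}$ and exponents $q_i \geqslant 0$ realizing the maximum in~(\ref{EqdRNSame}), pick elements witnessing the nonvanishing of the long commutator of the $[H\varkappa(B_{i_k}), L, \ldots, L]$, and build, via Young-tableaux / representation-theoretic machinery on $S_n$, a sequence of multilinear $H$-polynomials that are not identities of $L$ and span a space of dimension at least $C n^r d^n$. Here one uses that each $\varkappa(B_{i_k})$ is a (non-$H$-invariant but algebra-)copy of an $H$-simple summand, and that applying $H$ to it recovers enough room; the nilpotency of $\ad N$ guarantees that the "extra" commutators with $L$ do not kill the alternations. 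The main obstacle, and the step I would spend the most care on, is precisely justifying that one may use a non-$H$-invariant Levi complement $\varkappa(L/N)$ in place of the $H$-invariant $B$ required by $H$-niceness: one must check that all the places in the original argument where $H$-invariance of $B$ was invoked (the decomposition $L = B \oplus S \oplus N$, Lemmas~\ref{LemmaA0DirectSumOfFields} and~\ref{LemmaLHBA0ComplReducible}, and the choice of $H$-invariant $\tilde B$-submodules $T_k$) either become vacuous because $R = N$ forces $S = 0$, or can be replaced by applying $H$ to $\varkappa(B_{i_k})$ after the fact — i.e.\ that the correct invariant object is $H\varkappa(B_{i_k})$, not $\varkappa(B_{i_k})$ itself, which is exactly why the formula~(\ref{EqdRNSame}) is phrased with $H\varkappa(B_{i_k})$ rather than $\varkappa(B_{i_k})$.
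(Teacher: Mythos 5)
Your derivation of the corollary is exactly the paper's: Theorem~\ref{TheoremMainNRSame} gives $C_1 n^{r_1} d^n \leqslant c^{H}_n(L) \leqslant C_2 n^{r_2} d^n$, so $\sqrt[n]{c^H_n(L)} \to d \in \mathbb Z_+$, i.e.\ $\PIexp^H(L)$ exists and is a non-negative integer, with the nilpotent case disposed of by the preceding remark. The remainder of your proposal is a sketch of the proof of Theorem~\ref{TheoremMainNRSame} itself (a separate statement with its own proof in the paper); as a proof of the corollary, your argument is correct and identical to the paper's.
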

\begin{remark}
The existence of a decomposition $L/N = B_1 \oplus \ldots \oplus B_q$ (direct sum of $H$-invariant ideals)
where $B_i$ are $H$-simple Lie algebras, follows from~\cite[Theorem~6]{ASGordienko4}.
The existence of the map $\varkappa$ follows from the ordinary Levi theorem.
\end{remark}
\begin{remark}
Note that by~\cite[Lemma~9]{GordienkoKochetov},
every differential simple algebra is simple.
By~\cite[Lemma~10]{GordienkoKochetov}, a $G$-simple algebra is simple
for a rational action of a connected affine algebraic group~$G$.
Therefore, Theorem~\ref{TheoremMainNRSame} yields another proof of Theorems~\ref{TheoremLieDerPIexpEqual}
and~\ref{TheoremLieGConnPIexpEqual} for the case $R=N$ since in the conditions of the latter theorems there exists an $H$-invariant Levi decomposition and we can choose $\varkappa$ to be a homomorphism of $H$-modules.
\end{remark}
\begin{corollary}
Let $L$ be a finite dimensional non-nilpotent
Lie algebra  over a field $F$ of characteristic $0$ with an action of a group $G$ by automorphisms and anti-automorphisms. Suppose that the solvable radical of $L$ coincides with the nilpotent radical $N$ of $L$.
Then there exist constants $d\in\mathbb N$, $C_1, C_2 > 0$, $r_1, r_2 \in \mathbb R$ such that $$C_1 n^{r_1} d^n \leqslant c^{G}_n(L) \leqslant C_2 n^{r_2} d^n\text{ for all }n \in \mathbb N.$$
\end{corollary}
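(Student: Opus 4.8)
Proof plan for the final Corollary (the $G$-codimension statement for $L$ with $R=N$, arbitrary group $G$).

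The plan is to reduce the statement about an arbitrary group $G$ acting by automorphisms and anti-automorphisms to the already established Theorem~\ref{TheoremMainNRSame} about $H$-module Lie algebras. First I would pass to the algebraic closure: since $H$-codimensions (and in particular $G$-codimensions) are unchanged under extension of the base field, and since the hypotheses $R=N$ and the nilpotency of $N$ are preserved, we may assume $F$ is algebraically closed. Next, following the standard device used for group actions by automorphisms and anti-automorphisms (see e.g.\ the reference to~\cite[Section~1.2]{ASGordienko5} in the $G$-codimension example above), I would encode the $G$-action by a suitable Hopf algebra. When $G$ acts only by automorphisms, one takes $H = FG$; to accommodate anti-automorphisms one enlarges this to the appropriate Hopf algebra $H$ (a group-algebra-type Hopf algebra, possibly twisted, whose module algebras are exactly Lie algebras with a $G$-action by automorphisms and anti-automorphisms), so that the $G$-codimensions $c^G_n(L)$ coincide with the $H$-codimensions $c^H_n(L)$ for this $H$. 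The crucial point is that $N$, being the nilpotent radical of $L$, is invariant under \emph{all} automorphisms and anti-automorphisms of $L$ — it is a characteristic ideal — hence $N$ is automatically an $H$-submodule, with no further hypothesis on the $G$-action needed. That is exactly the hypothesis required by Theorem~\ref{TheoremMainNRSame}.

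The main steps in order are therefore: (i) reduce to $F$ algebraically closed via the field-extension invariance of codimensions; (ii) choose the Hopf algebra $H$ so that $H$-module Lie algebra structures on $L$ are the same as $G$-actions by automorphisms and anti-automorphisms, and $c^H_n(L)=c^G_n(L)$; (iii) observe that the solvable radical $R$ of $L$, which equals the nilpotent radical $N$ by hypothesis, is a characteristic ideal, hence $H$-invariant; (iv) check $L$ is non-nilpotent (given); (v) apply Theorem~\ref{TheoremMainNRSame} to $L$ as an $H$-module Lie algebra to get the constants $d\in\mathbb N$, $C_1,C_2>0$, $r_1,r_2\in\mathbb R$ with $C_1 n^{r_1} d^n \leqslant c^H_n(L) \leqslant C_2 n^{r_2} d^n$; (vi) translate back: $c^H_n(L)=c^G_n(L)$ yields the claimed inequality for $c^G_n(L)$.

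The step I expect to carry the only real content is (ii)–(iii): making precise the identification of $G$-identities (allowing anti-automorphisms) with $H$-identities for an honest Hopf algebra $H$, and verifying that the relevant radical is $H$-invariant. The latter is genuinely automatic here — one does not need to assume anything about how $G$ interacts with the Levi decomposition or with the various Wedderburn–Mal'cev decompositions (which is what makes $H$-niceness hard to verify in general), precisely because the \emph{nilpotent} radical is characteristic and we have assumed $R=N$. Everything else is routine: the field extension argument is standard (as in the proof of Theorem~\ref{TheoremLieDerPIexpEqual}), and once $H$ is in place the Corollary is a direct instance of Theorem~\ref{TheoremMainNRSame}. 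No structural hypotheses beyond $R=N$ and finite-dimensionality are used, so the Corollary indeed holds for an \emph{arbitrary} group $G$, as claimed.
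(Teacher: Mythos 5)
Your overall architecture is sound and you correctly isolate the two essential points: that the nilpotent radical is a characteristic ideal (so its $H$-invariance is automatic), and that everything should reduce to Theorem~\ref{TheoremMainNRSame}. But the step you yourself flag as carrying ``the only real content''~--- step (ii), the construction of a Hopf algebra $H$ with $c^H_n(L)=c^G_n(L)$ when some elements of $G$ act by anti-automorphisms~--- is a genuine gap, and the ``possibly twisted group-algebra-type Hopf algebra'' you hope for does not exist within the framework in which Theorem~\ref{TheoremMainNRSame} is proved. The defining axiom of an $H$-module algebra here is $h(ab)=(h_{(1)}a)(h_{(2)}b)$; an anti-automorphism would require $h(ab)=(h_{(2)}b)(h_{(1)}a)$, which is a different kind of structure, and Theorem~\ref{TheoremMainNRSame} (and all the machinery behind it, e.g. the free $H$-module Lie algebra $L(X|H)$) is only available for honest $H$-module algebras. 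Polynomial $G$-identities with anti-automorphisms are defined separately, not as $H$-identities for any $H$.

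The actual resolution is a Lie-specific trick, which is the content of \cite[Lemma~28]{ASGordienko5} invoked in the paper's proof: if $\varphi$ is an anti-automorphism of a Lie algebra, then $-\varphi$ is an automorphism, since $(-\varphi)([a,b])=-[\varphi(b),\varphi(a)]=[\varphi(a),\varphi(b)]=[(-\varphi)(a),(-\varphi)(b)]$; replacing each anti-automorphism $\rho(g)$ by $-\rho(g)$ still yields a group action by automorphisms and does not change the $G$-codimensions (the evaluations of multilinear polynomials change only by signs). After this reduction one takes $H=FG$ in the ordinary sense, your observation (iii) that the nilpotent radical is invariant under all automorphisms applies, and Theorem~\ref{TheoremMainNRSame} finishes the proof. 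A minor further remark: your base-change step (i) is unnecessary for this corollary, since the asymptotic estimate in Theorem~\ref{TheoremMainNRSame} is stated over an arbitrary field of characteristic $0$ (algebraic closedness is only needed for the explicit formula for $d$).
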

\begin{proof}
By~\cite[Lemma~28]{ASGordienko5}, we may assume that $G$ is acting by automorphisms only. Now we notice that
radicals are invariant under all automorphisms. Hence we may apply Theorem~\ref{TheoremMainNRSame}.
\end{proof}
\begin{corollary}
Let $L$ be a finite dimensional non-nilpotent
Lie algebra  over a field $F$ of characteristic $0$ with an action of a Lie algebra $\mathfrak g$ by derivations. Suppose that the solvable radical of $L$ coincides with the nilpotent radical $N$ of $L$. Then there exist constants $d\in\mathbb N$, $C_1, C_2 > 0$, $r_1, r_2 \in \mathbb R$ such that $$C_1 n^{r_1} d^n \leqslant c^{U(\mathfrak g)}_n(L) \leqslant C_2 n^{r_2} d^n\text{ for all }n \in \mathbb N.$$
\end{corollary}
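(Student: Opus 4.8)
The plan is to reduce this corollary directly to Theorem~\ref{TheoremMainNRSame}. The statement to prove is: for a finite dimensional non-nilpotent Lie algebra $L$ over a field $F$ of characteristic $0$ with an action of a Lie algebra $\mathfrak g$ by derivations, whose solvable radical equals the nilpotent radical $N$, there exist $d\in\mathbb N$, $C_1,C_2>0$, $r_1,r_2\in\mathbb R$ with $C_1 n^{r_1} d^n \leqslant c^{U(\mathfrak g)}_n(L) \leqslant C_2 n^{r_2} d^n$ for all $n\in\mathbb N$. The natural route: set $H=U(\mathfrak g)$, so that an action of $\mathfrak g$ on $L$ by derivations is exactly the structure of an $H$-module Lie algebra, and $c^{U(\mathfrak g)}_n(L)=c^H_n(L)$ by definition (the relevant Example in Section~\ref{SectionDerH}). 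Theorem~\ref{TheoremMainNRSame} applies provided its hypotheses hold: $L$ must be finite dimensional, non-nilpotent, with solvable radical coinciding with the nilpotent radical $N$, and $N$ must be an $H$-submodule. All of these are given or immediate except the $H$-invariance of $N$.

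So the only real point to check is that $N$ is $\mathfrak g$-invariant, i.e.\ $\delta(N)\subseteq N$ for every $\delta\in\mathfrak g$ acting by a derivation. This is a standard fact: both the solvable radical $R$ and the nilpotent radical $N$ of a finite dimensional Lie algebra over a field of characteristic $0$ are invariant under every derivation. For the solvable radical one argues that $\delta(R)+R$ is again a solvable ideal (using that $\delta$ is a derivation, so $\delta(R)$ is contained in the solvable ideal $R + \delta(R)$ whose solvability follows e.g.\ from the fact that $R$ is a characteristic ideal), hence $\delta(R)\subseteq R$ by maximality. For the nilpotent radical $N$ — which by hypothesis equals $R$ here — one can use the same kind of maximality argument, or simply invoke that $N = R$ is already known to be derivation-invariant. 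Since $\mathfrak g\subseteq \Der(L)$, it follows that $N$ is a $\mathfrak g$-submodule and therefore a $U(\mathfrak g)$-submodule.

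Having verified the hypotheses, the conclusion is immediate: Theorem~\ref{TheoremMainNRSame} furnishes exactly the constants $d\in\mathbb N$, $C_1,C_2>0$, $r_1,r_2\in\mathbb R$ with the stated two-sided bound on $c^H_n(L)=c^{U(\mathfrak g)}_n(L)$. One may also remark, though it is not needed for the statement, that if $F$ is algebraically closed the constant $d$ is the one described explicitly in Theorem~\ref{TheoremMainNRSame} via the decomposition $L/N = B_1\oplus\cdots\oplus B_q$ into $H$-simple ideals and the non-vanishing of the iterated commutators built from $H\varkappa(B_{i_k})$; over a general $F$ one passes to $\bar F$, using that $H$-codimensions are unchanged under base field extension, as in the proof of Theorem~\ref{TheoremLieDerPIexpEqual}.

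I do not expect any genuine obstacle: the corollary is a straightforward specialization of Theorem~\ref{TheoremMainNRSame} to $H=U(\mathfrak g)$, and the lone nontrivial input is the classical invariance of the nilpotent radical under derivations, which in the present setting is doubly guaranteed by the hypothesis $R=N$ together with the derivation-invariance of $R$. The only thing requiring a word of care is making the identification "$\mathfrak g$ acts by derivations $\Longleftrightarrow$ $L$ is a $U(\mathfrak g)$-module Lie algebra" explicit, but this is recorded in the Example in Section~\ref{SectionDerH} and in the definition of $c^{U(\mathfrak g)}_n$.
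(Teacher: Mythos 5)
Your proposal is correct and follows the same route as the paper: verify that $N$ is invariant under all derivations (the paper cites Jacobson for the derivation-invariance of the radical, where you sketch the standard maximality argument) and then apply Theorem~\ref{TheoremMainNRSame} with $H=U(\mathfrak g)$. No issues.
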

\begin{proof}
By~\cite[Chapter~III, Section~6, Theorem~7]{JacobsonLie}, the
radical is invariant under all derivations. Hence we may apply Theorem~\ref{TheoremMainNRSame}.
\end{proof}

The algebra in the example below has no $G$-invariant Levi decomposition (see~\cite[Example~12]{ASGordienko4}), however it satisfies the analog of Amitsur's conjecture.

\begin{example}[Yuri Bahturin]\label{ExampleGnoninvLevi}
Let $F$ be a field of characteristic $0$ and let $$L = \left\lbrace\left(\begin{array}{cc} C & D \\
0 & 0
  \end{array}\right) \mathrel{\biggl|} C \in \mathfrak{sl}_m(F), D\in M_m(F)\right\rbrace
  \subseteq \mathfrak{sl}_{2m}(F),$$ $m \geqslant 2$.
      Consider $\varphi \in \Aut(L)$ where
  $$\varphi\left(\begin{array}{cc} C & D \\
0 & 0
  \end{array}\right)=\left(\begin{array}{cc} C & C+D \\
0 & 0
  \end{array}\right).$$
  Then $L$ is a Lie algebra with an action of the group $G=\langle \varphi \rangle
  \cong \mathbb Z$ by automorphisms
  and there exist constants $C_1, C_2 > 0$, $r_1, r_2 \in \mathbb R$ such that $$C_1 n^{r_1} (m^2-1)^n \leqslant c^G_n(L) \leqslant C_2 n^{r_2} (m^2-1)^n\text{ for all }n \in \mathbb N.$$
\end{example}
\begin{proof}
$G$-codimensions do not change upon an extension of the base field.
The proof is analogous to the cases of ordinary codimensions of
associative~\cite[Theorem~4.1.9]{ZaiGia} and
Lie algebras~\cite[Section~2]{ZaiLie}. Moreover, upon an extension of $F$, $L$ 
remains the algebra of the same type.
Thus without loss of generality we may assume
 $F$ to be algebraically closed.
 
 Note that
 $$N=\left\lbrace\left(\begin{array}{cc} 0 & D \\
0 & 0
  \end{array}\right) \mathrel{\biggl|} D\in M_m(F)\right\rbrace
  $$
  is the solvable (and nilpotent) radical of $L$ and $L/N \cong \mathfrak{sl}_m(F)$
  is a simple Lie algebra. Hence $\PIexp^G(L)=\dim\mathfrak{sl}_m(F)= m^2-1$
  by Theorem~\ref{TheoremMainNRSame}.
\end{proof}

The algebra in the example below has no $L$-invariant Levi decomposition (see~\cite[Example~13]{ASGordienko4}), however it satisfies the analog of Amitsur's conjecture.

\begin{example}
Let $L$ be the Lie algebra from Example~\ref{ExampleGnoninvLevi}.
      Consider the adjoint action of $L$ on itself by derivations.
        Then there exist constants $C_1, C_2 > 0$, $r_1, r_2 \in \mathbb R$ such that $$C_1 n^{r_1} (m^{2}-1)^n \leqslant c^{U(L)}_n(L) \leqslant C_2 n^{r_2} (m^2-1)^{n}\text{ for all }n \in \mathbb N.$$
\end{example}
\begin{proof} Again, without loss of generality we may assume
 $F$ to be algebraically closed. Since $L/N \cong \mathfrak{sl}_m(F)$
  is a simple Lie algebra, $\PIexp^{U(L)}(L)=\dim\mathfrak{sl}_m(F)= m^2-1$
  by Theorem~\ref{TheoremMainNRSame}.
\end{proof}

\subsection{$S_n$-cocharacters and upper bound}

One of the main tools in the investigation of polynomial
identities is provided by the representation theory of symmetric groups.

Let $L$ be an $H$-module Lie algebra
over a field $F$ of characteristic $0$.
 The symmetric group $S_n$  acts
 on the spaces $\frac {V^H_n}{V^H_{n}
  \cap \Id^H(L)}$
  by permuting the variables.
  Irreducible $FS_n$-modules are described by partitions
  $\lambda=(\lambda_1, \ldots, \lambda_s)\vdash n$ and their
  Young diagrams $D_\lambda$.
   The character $\chi^H_n(L)$ of the
  $FS_n$-module $\frac {V^H_n}{V^H_n
   \cap \Id^H(L)}$ is
   called the $n$th
  \textit{cocharacter} of polynomial $H$-identities of $L$.
   We can rewrite $\chi^H_n(L)$ as
  a sum $$\chi^H_n(L)=\sum_{\lambda \vdash n}
   m(L, H, \lambda)\chi(\lambda)$$ of
  irreducible characters $\chi(\lambda)$.
Let  $e_{T_{\lambda}}=a_{T_{\lambda}} b_{T_{\lambda}}$
and
$e^{*}_{T_{\lambda}}=b_{T_{\lambda}} a_{T_{\lambda}}$
where
$a_{T_{\lambda}} = \sum_{\pi \in R_{T_\lambda}} \pi$
and
$b_{T_{\lambda}} = \sum_{\sigma \in C_{T_\lambda}}
 (\sign \sigma) \sigma$,
be Young symmetrizers corresponding to a Young tableau~$T_\lambda$.
Then $M(\lambda) = FS e_{T_\lambda} \cong FS e^{*}_{T_\lambda}$
is an irreducible $FS_n$-module corresponding to
 a partition~$\lambda \vdash n$.
  We refer the reader to~\cite{Bahturin, DrenKurs, ZaiGia}
   for an account
  of $S_n$-representations and their applications to polynomial
  identities.

In the next two lemmas we consider
 a finite dimensional 
$H$-module Lie algebra $L$ with an $H$-invariant nilpotent ideal $N$
where  $H$ is a Hopf algebra over a field $F$ of characteristic $0$
and $N^p=0$
for some $p\in\mathbb N$.
Fix a decomposition $L/N = B_1 \oplus \ldots \oplus B_q $
where $B_i$ are some subspaces.
 Let $\varkappa \colon L/N \to L$
be an $F$-linear map such that $\pi\varkappa = \id_{L/N}$ where $\pi \colon L \to L/N$
is the natural projection. Define the number $d$ by~(\ref{EqdRNSame}).

\begin{lemma}\label{LemmaNRSameUpperCochar}
 Let $n\in\mathbb N$ and $\lambda = (\lambda_1, \ldots, \lambda_s) \vdash n$. Then if $\sum_{k=d+1}^s \lambda_k \geqslant p$, we have $m(L, H, \lambda)=0$. 
\end{lemma}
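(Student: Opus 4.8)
The plan is to use the standard "substitution of the Young symmetrizer" technique for bounding cocharacters from above. Suppose, for contradiction, that $m(L,H,\lambda)\ne 0$ for some $\lambda=(\lambda_1,\ldots,\lambda_s)\vdash n$ with $\sum_{k=d+1}^{s}\lambda_k\ge p$. Then there is a multilinear Lie $H$-polynomial $f\in V^H_n$ and a Young tableau $T_\lambda$ such that $e^{*}_{T_\lambda}f=b_{T_\lambda}a_{T_\lambda}f\notin\Id^H(L)$; in particular one may assume $f$ is already alternating in the $s$ groups of variables indexed by the columns of $T_\lambda$, where the first $d+1$ columns have heights $\ge d+1$ and, crucially, the columns numbered $d+1$ and beyond contribute together at least $p$ disjoint alternating sets of size $>d$ (here I would be slightly more careful: what matters is that after grinding, one isolates $p$ disjoint alternating sets each of cardinality $\ge d+1$, which is exactly what the hypothesis $\sum_{k=d+1}^s\lambda_k\ge p$ guarantees once we note $\lambda_k\le$ number of rows and reorganize). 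Since $f\notin\Id^H(L)$, there is an admissible substitution of the variables by elements of a fixed basis of $L$ making $f$ nonzero.

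Next I would pass to the basis of $L$ adapted to the decomposition $L=\varkappa(L/N)\oplus N$, writing each basis vector as lying in $\varkappa(B_i)$ for some $i$ or in $N$. Because $f$ is multilinear and alternating in each of the $p$ distinguished sets of size $\ge d+1$, and because $N^p=0$, in each such alternating set at least one variable must be substituted from $\varkappa(L/N)$ — in fact, since $\dim\varkappa(L/N)=\dim(L/N)$ and the relevant alternating sets have size exceeding $d$, a counting/pigeonhole argument together with the definition~(\ref{EqdRNSame}) of $d$ forces, for each of the $p$ sets, more than $d$ of the substituted values, hence too many, to come from outside any admissible union $\varkappa(B_{i_1})\oplus\cdots\oplus\varkappa(B_{i_r})$ of total dimension $\le d$; consequently in each of the $p$ distinguished alternating sets at least one slot receives a value from $N$. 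Collecting one $N$-element from each of the $p$ disjoint sets and using that the long commutator is left-normed (so these $p$ elements, together with the structure of $f$, produce a product lying in $N^p$), one concludes that the value of $f$ under this substitution lies in $N^p=0$, a contradiction.

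The main obstacle I expect is the bookkeeping in the middle step: translating the inequality $\sum_{k=d+1}^{s}\lambda_k\ge p$ on the tail of the partition into the combinatorial statement that one can extract $p$ pairwise disjoint alternating subsets of the variables, each of cardinality at least $d+1$, in such a way that the alternation of $f$ on each of them is genuinely used. Once that is set up, the rest is the familiar argument: $d$ is by definition~(\ref{EqdRNSame}) the largest dimension of a sum $\bigoplus_j B_{i_j}$ that survives a nonzero commutator with $L$'s, so any alternating set of size $>d$ substituted entirely from $\varkappa(L/N)$ (modulo $N$) must, after projecting to $L/N=\bigoplus B_i$ and using alternation plus the maximality of $d$, actually force a contribution from $N$; and $p$ such contributions annihilate $f$ because $N^{p}=0$. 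I would also remark that this is precisely the Lie-algebra analogue of the associative argument in the cited work of Gordienko--Kochetov, and I would invoke the relevant structural lemmas from~\cite{ASGordienko5} rather than reproving them.
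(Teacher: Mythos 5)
Your overall strategy is the right one and is essentially the paper's: reduce to showing $e^{*}_{T_\lambda}f\in\Id^H(L)$, evaluate on a basis of $L$ adapted to $\varkappa(B_1),\ldots,\varkappa(B_q)$ and $N$, use the fact that $b_{T_\lambda}$ alternates the variables of each column to force distinct basis elements within a column, use the definition~(\ref{EqdRNSame}) of $d$ to conclude that the non-$N$ basis elements occurring in a nonvanishing evaluation span a subspace of dimension at most $d$, and finish with $N^p=0$. However, the combinatorial middle step you flag as the ``main obstacle'' contains a genuine error. The hypothesis $\sum_{k=d+1}^{s}\lambda_k\geqslant p$ does \emph{not} let you isolate $p$ pairwise disjoint alternating sets each of cardinality at least $d+1$: the number of columns of height $\geqslant d+1$ is $\lambda_{d+1}$, not $\sum_{k=d+1}^{s}\lambda_k$. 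For instance, if $\lambda=(1,1,\ldots,1)$ is a single column of height $d+p$, then $\sum_{k=d+1}^{s}\lambda_k=p$ but there is only \emph{one} alternating set of size exceeding $d$. Since your plan harvests only one element of $N$ from each such set, in this case you would obtain a single factor from $N$ rather than $p$ of them, and the conclusion ``the value lies in $N^p=0$'' would not follow.

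The fix is to count the excess \emph{per column} rather than one element per column: a column of height $h$ must receive $h$ pairwise distinct basis elements, of which at most $d$ can lie outside $N$ (by the definition of $d$, a nonzero evaluation can only involve $\varkappa(B_{i_1})\oplus\cdots\oplus\varkappa(B_{i_r})$ with $\dim(B_{i_1}\oplus\cdots\oplus B_{i_r})\leqslant d$), so at least $\max(h-d,0)$ of them lie in $N$. Summing these excesses over all columns gives exactly $\sum_{k=d+1}^{s}\lambda_k\geqslant p$ substituted elements from $N$, and since $N$ is an ideal with $N^p=0$ the evaluation vanishes. This is precisely how the paper argues; with that replacement your proof closes. (Your appeal to the ``left-normed'' structure is also unnecessary: one only needs that $N$ is an ideal, so any multilinear Lie monomial containing $p$ factors from $N$ lies in $N^p$.)
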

\begin{proof}
It is sufficient to prove that $e^{*}_{T_\lambda}f \in \Id^H(L)$ for all $f \in V_n$ and for all Young tableaux $T_\lambda$ corresponding to $\lambda$.

Fix a basis in $L$ that is a union of bases of~$\varkappa(B_1),\ldots, \varkappa(B_q)$ and~$N$.
Since $e^{*}_{T_\lambda}f$ is multilinear, it sufficient to prove that $e^{*}_{T_\lambda}f$
vanishes under all evaluations on basis elements.
Fix some substitution of basis elements and choose $1 \leqslant i_1,\ldots,i_r \leqslant q$
such that all the elements substituted belong to $\varkappa(B_{i_1})\oplus \ldots \oplus \varkappa(B_{i_r}) \oplus N$, and for each $j$ we have an element being substituted from $\varkappa(B_{i_j})$.
Then we may assume that $\dim(B_{i_1}\oplus \ldots \oplus B_{i_r}) \leqslant d$,
since otherwise $e^{*}_{T_\lambda}f$ is zero by the definition of $d$.
 Note that
$e^{*}_{T_\lambda} = b_{T_\lambda} a_{T_\lambda}$
and $b_{T_\lambda}$ alternates the variables of each column
of $T_\lambda$. Hence if $e^{*}_{T_\lambda} f$ does not vanish, this implies that different basis elements
are substituted for the variables of each column. 
Therefore, at least $\sum_{k=d+1}^s \lambda_k \geqslant p$ elements must be taken from $N$.
Since $N^p = 0$, we have $e^{*}_{T_\lambda} f \in \Id^H(L)$.
\end{proof}

\begin{lemma}\label{LemmaNRSameUpper} 
If $d > 0$, then there exist constants $C_2 > 0$, $r_2 \in \mathbb R$
such that $c^H_n(L) \leqslant C_2 n^{r_2} d^n$
for all $n \in \mathbb N$. In the case $d=0$, the algebra $L$ is nilpotent.
\end{lemma}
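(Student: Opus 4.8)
The plan is to derive the codimension bound from the cocharacter bound of Lemma~\ref{LemmaNRSameUpperCochar} by a standard hook-length estimate, and to treat the case $d=0$ separately. First I would observe that, since $L$ is finite dimensional, say $\dim L = m$, every partition $\lambda$ with $\lambda_{m+1} \ne 0$ satisfies $e^{*}_{T_\lambda} f \in \Id^H(L)$ by the usual alternation argument (more than $m$ basis elements would have to be substituted into a single column), so $m(L,H,\lambda) = 0$ whenever $\lambda$ has more than $m$ rows; combined with Lemma~\ref{LemmaNRSameUpperCochar}, the only partitions $\lambda \vdash n$ that contribute to $\chi^H_n(L)$ lie in the region $\lambda_{m+1}=\cdots=0$ and $\sum_{k=d+1}^s \lambda_k \leqslant p-1$. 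Writing $c^H_n(L) = \sum_\lambda m(L,H,\lambda)\dim M(\lambda)$, I would bound $m(L,H,\lambda)$ by a polynomial in $n$: this is the standard fact that for a fixed finite dimensional $H$-module algebra the multiplicities grow polynomially, which follows from the description of $V^H_n/(V^H_n\cap\Id^H(L))$ as a quotient of a sum of boundedly many copies of the regular representation tensored with evaluations, or can be cited from~\cite{ASGordienko5}; in any case $m(L,H,\lambda)\leqslant C n^{c}$ for constants independent of $\lambda$.

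Next I would estimate $\sum_\lambda \dim M(\lambda)$ over the admissible region. The number of such partitions is itself polynomially bounded in $n$ (they are determined by $\lambda_1,\dots,\lambda_d$ together with a partition of at most $p-1$ into at most $m-d$ parts, the latter being a bounded set), so it suffices to bound a single $\dim M(\lambda)$. For $\lambda$ in the strip of height $\leqslant m$ with the tail beyond row $d$ bounded by $p-1$, the hook length formula gives $\dim M(\lambda) \leqslant C' n^{r'} d^{\,n-(p-1)} \cdot (p-1+\text{const})^{p-1} \leqslant C'' n^{r'} d^n$; concretely one uses the elementary inequality $\dim M(\lambda) \leqslant d(\lambda)^{n}$ where $d(\lambda) := \lambda_1 + \dots$ counts something like the number of ``columns'' — more precisely the bound $\dim M(\lambda)\leqslant \left(\sum_{i} \lambda_i / n\right)^{-n}\cdots$ — so I would instead invoke the clean statement from~\cite{ZaiGia} or~\cite{ASGordienko5}: if all $\lambda$ with $m(L,H,\lambda)\ne 0$ satisfy $\lambda_{d+1}+\cdots+\lambda_s \leqslant p-1$ and $s \leqslant m$, then $\sum_{\lambda} \dim M(\lambda) \leqslant C_2 n^{r_2} d^n$. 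Summing, $c^H_n(L) \leqslant (\text{poly in }n)\cdot C_2 n^{r_2} d^n$, which after renaming constants is the desired bound.

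For the case $d=0$: the number $d$ is $0$ precisely when $[H\varkappa(B_i), L,\dots,L]=0$ for every single $i$ and every number of brackets $q_i \geqslant 0$, in particular $[H\varkappa(B_i),L,\dots,L]=0$ already with $r=1$. I would argue that $d=0$ forces $L$ to be nilpotent: the span $\sum_i H\varkappa(B_i)$ together with $N$ generates $L$ as an $H$-module (since $\pi$ maps $\varkappa(B_i)$ onto $B_i$ and the $B_i$ span $L/N$, while $N^p=0$), and if every $H\varkappa(B_i)$ lies in the intersection of the lower central series of $L$, then $L$ itself is spanned modulo $N$ by elements that are ``asymptotically central'', forcing, via the nilpotency of $N$, that some term of the lower central series of $L$ vanishes. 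More carefully: apply Lemma~\ref{LemmaNRSameUpperCochar} with $d=0$, giving $m(L,H,\lambda)=0$ whenever $\sum_{k\geqslant 1}\lambda_k = n \geqslant p$; hence $c^H_n(L)=0$ for all $n\geqslant p$, i.e. $V^H_n \subseteq \Id^H(L)$, which is exactly the statement that $L$ is nilpotent. This last observation actually gives the cleanest route and I would use it directly rather than a structural argument.

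The main obstacle I anticipate is the polynomial bound on the multiplicities $m(L,H,\lambda)$ uniformly in $\lambda$: this is the one ingredient not produced by a purely combinatorial manipulation of Young diagrams, and it is where one must either reproduce the argument that $\dim\bigl(V^H_n/(V^H_n\cap\Id^H(L))\bigr)$-type multiplicities of a fixed finite dimensional algebra grow polynomially (bounding the multiplicity of $\chi(\lambda)$ by the dimension of a space of multilinear polynomials evaluated on a bounded basis, hence by a fixed power of $n$ times $\dim L$), or cite it cleanly from~\cite{ASGordienko5}. Everything else — the height bound from finite dimensionality, the hook-length estimate, and the $d=0$ degeneration — is routine and can be quoted almost verbatim from the associative case in~\cite[Chapter~6]{ZaiGia}.
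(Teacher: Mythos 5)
Your proposal is correct and follows essentially the same route as the paper: the paper's proof is exactly ``Lemma~\ref{LemmaNRSameUpperCochar} plus the standard hook-length estimates of \cite[Lemmas~6.2.4, 6.2.5]{ZaiGia} give $\sum_{m(L,H,\lambda)\ne 0}\dim M(\lambda)\leqslant C_3 n^{r_3}d^n$, and the polynomial bound on multiplicities from \cite[Theorem~12]{ASGordienko5} converts this into the codimension bound'' --- precisely the two ingredients you identify. Your treatment of the $d=0$ case via Lemma~\ref{LemmaNRSameUpperCochar} (giving $c^H_n(L)=0$ for $n\geqslant p$, hence nilpotency) is sound and is in fact more explicit than the paper, which leaves that case to the reader.
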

\begin{proof}
Lemma~\ref{LemmaNRSameUpperCochar} and~\cite[Lemmas~6.2.4, 6.2.5]{ZaiGia}
imply
$$
\sum_{m(L,H, \lambda)\ne 0} \dim M(\lambda) \leqslant C_3 n^{r_3} d^n
$$
for some constants $C_3, r_3 > 0$.
Together with \cite[Theorem~12]{ASGordienko5} this inequality yields the upper bound.
\end{proof}

\subsection{Lower bound}\label{SubsectionNRSameLowerBound}

Lemma~\ref{LemmaHRSameLowerPolynomial} below is a version of~\cite[Lemma~20]{ASGordienko5}
adapted for our case.

\begin{lemma}\label{LemmaHRSameLowerPolynomial}
Suppose that $F$ is an algebraically closed field of charactreistic $0$ and
let $L$, $N$, $\varkappa$, $B_i$, and $d$ be the same as in Theorem~\ref{TheoremMainNRSame}.
If $d > 0$, then there exists a number $n_0 \in \mathbb N$ such that for every $n\geqslant n_0$
there exist disjoint subsets $X_1$, \ldots, $X_{2k} \subseteq \lbrace x_1, \ldots, x_n
\rbrace$, $k := \left[\frac{n-n_0}{2d}\right]$,
$|X_1| = \ldots = |X_{2k}|=d$ and a polynomial $f \in V^H_n \backslash
\Id^H(L)$ alternating in the variables of each set $X_j$.
\end{lemma}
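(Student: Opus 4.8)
The plan is to construct the alternating polynomial $f$ by combining $2k$ "blocks", each of which realizes the nonvanishing commutator from the definition of $d$. First I would fix $i_1,\ldots,i_r$ and $q_1,\ldots,q_r\geqslant 0$ achieving the maximum, so that $\dim(B_{i_1}\oplus\ldots\oplus B_{i_r})=d$ and
$$\bigl[[H\varkappa(B_{i_1}),\underbrace{L,\ldots,L}_{q_1}],\ldots,[H\varkappa(B_{i_r}),\underbrace{L,\ldots,L}_{q_r}]\bigr]\ne 0.$$
By multilinearity, there exist specific elements $h_{1},\ldots,h_{r}\in H$, basis vectors $u_j\in\varkappa(B_{i_j})$, and basis vectors of $L$ filling in the $q_j$ slots, such that the resulting value $w_0\in L$ is nonzero. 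Each such "block" $B(y_1,\ldots,y_m)$ (where $m=r+q_1+\ldots+q_r$) is a multilinear Lie $H$-polynomial that takes the value $w_0$ on this particular substitution with the $\varkappa(B_{i_j})$-arguments in prescribed positions.

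The key step is to build an alternating polynomial out of these blocks using the standard "folding" / Regev-type trick used in~\cite{ASGordienko5}. I would use $2k$ disjoint copies of the block, whose arguments coming from $\varkappa(B_{i_1}\oplus\ldots\oplus B_{i_r})$ are partitioned into the $2k$ sets $X_j$ (each of size $d$, since $\sum_{j}\dim B_{i_j}=d$ and on $B_{i_j}$ one uses a full basis). Since $H\varkappa(B_{i_j})$ is semisimple-acted and the blocks evaluate on the semisimple part $\varkappa(L/N)$ which is a genuine subalgebra, nonzero products of several blocks can be arranged by inserting, between consecutive blocks, elements of $N$ or of $\varkappa(L/N)$ as "glue", exactly as in the associative/Lie prototypes; the nilpotency bound $N^p=0$ is respected because each block uses boundedly many $N$-arguments and we need $k\leqslant\frac{n-n_0}{2d}$ so that $n_0$ absorbs the glue and the bounded overhead. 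Then I would apply the alternization operator $\prod_{j=1}^{2k}\left(\sum_{\sigma\in\mathrm{Sym}(X_j)}(\sign\sigma)\sigma\right)$ to the product of blocks. The point of using $2k$ (an even number of) alternating sets, each realized on a \emph{semisimple} piece where the $d$ chosen basis vectors span a subalgebra isomorphic to a sum of $H$-simple algebras, is that the alternization does not annihilate the value: this is precisely~\cite[Lemma~19]{ASGordienko5} (or its proof), which shows that on $B_{i_1}\oplus\ldots\oplus B_{i_r}$ one can find an evaluation surviving the alternator, using that $d=\dim(B_{i_1}\oplus\ldots\oplus B_{i_r})$ equals the number of variables alternated in each $X_j$.

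The main obstacle I expect is verifying that the product of the $2k$ blocks, after alternization, is genuinely nonzero modulo $\Id^H(L)$ — i.e., that the "gluing" between blocks can be done simultaneously with the alternation without forcing a repeated basis element in some $X_j$ (which would kill the term) or exceeding the $N$-budget. This is handled by: (i) choosing the glue elements from $\varkappa(L/N)$ whenever possible so as not to spend $N$-factors, and using that $\varkappa(L/N)\cong L/N$ is a semisimple Lie algebra on which long nonzero commutators exist; (ii) invoking the key algebraic input that on a semisimple Lie algebra of dimension $d$, an associated multilinear polynomial can be chosen to survive alternization in $d$ variables (the Giambruno–Zaicev / \cite{ZaiLie}-type computation reproduced in~\cite[Section~6]{ASGordienko5}); and (iii) noting that, since $H\varkappa(B_{i_j})$ spans a subalgebra and the $h$-action on the semisimple part is inner-like, the $H$-decorations on the block arguments do not interfere. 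Once a single surviving evaluation is exhibited, the corresponding $f\in V^H_n$ lies outside $\Id^H(L)$, is alternating in each $X_j$ by construction, and involves exactly $n$ variables after padding with $n-2kd$ trivially-substituted extra variables; this proves the lemma. The threshold $n_0$ is taken large enough to accommodate the fixed per-block overhead $m-d$ times $2k$ plus the constant number of glue variables, which is linear in $k$ and hence controlled by $k=\left[\frac{n-n_0}{2d}\right]$.
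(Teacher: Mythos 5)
Your overall strategy (realize the alternating sets on the semisimple part, control the $N$-budget via $n_0$, import the key alternating-polynomial input from~\cite{ASGordienko5}) points in the right direction, but the central construction as you describe it does not work in a Lie algebra, and this is exactly where the actual proof has to do something different. You propose to take $2k$ disjoint copies of the nonvanishing block and combine them with ``glue''; but a block evaluates to an \emph{element} of $L$, and there is no way to ``multiply'' $2k$ such values — any attempt to bracket copies of essentially the same value together collapses (e.g.\ $[w_0,w_0]=0$), and in general long Lie monomials in a solvable-plus-semisimple algebra die quickly. Moreover, each block $\bigl[[H\varkappa(B_{i_1}),L,\dots,L],\dots,[H\varkappa(B_{i_r}),L,\dots,L]\bigr]$ contains only \emph{one} argument from each $B_{i_j}$, not a full basis, so your sets $X_j$ of size $d$ do not actually materialize from the blocks. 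The paper's proof keeps a \emph{single} outer bracket and achieves the replication by a factor of $2k$ \emph{inside} each slot: by the Density Theorem, $\End_F(B_i)$ is generated by $H$ and $\ad B_i$, so one inserts $\sum_{\ell}e^{(i)}_{\ell\ell_i}f_i(\ad\bar x_{i1},\dots)e^{(i)}_{s_i\ell}$ — a nonzero \emph{scalar operator} on $B_i$ built from the multilinear associative alternating polynomial $f_i$ of~\cite[Theorem~11]{ASGordienko5}, alternating in $2k$ sets of size $d_i=\dim B_i$ — in front of $b_i$. The alternating sets $X_\ell=\bigcup_i X^{(i)}_\ell$ of size $d=\sum_i d_i$ then survive alternation because the $f_i$ are already alternating and $[B_i,B_j]=0$ for $i\ne j$. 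This operator-insertion idea is the missing ingredient in your proposal; citing the alternator-survival lemma does not substitute for it, because that lemma produces associative polynomials in $\ad B_i$, and you still need a mechanism to embed them into a Lie $H$-monomial of the right shape.

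Two further gaps: (i) $\varkappa$ is only an algebra homomorphism, not an $H$-module map, so $h\varkappa(a)-\varkappa(ha)$ is a generally nonzero element of $N$; your remark that the ``$H$-decorations do not interfere'' is unjustified. The paper handles this by first saturating the bracket with extra elements of $N$ (possible since $N$ is nilpotent) so that any additional $N$-factor annihilates the expression, after which $\varkappa$ \emph{behaves} like an $H$-module homomorphism inside that expression. (ii) ``Padding with trivially-substituted extra variables'' is not available for multilinear Lie polynomials: every variable must occur inside the commutator and the value must stay nonzero. The paper pads by inserting a second alternating polynomial $\tilde f_1$ built for a larger parameter $\tilde k$, whose evaluation is a combination of long commutators of length exceeding $n-\tilde n$, from which the required $n-\tilde n$ extra variables are extracted. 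Without these three devices the proposed proof does not go through.
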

\begin{proof} Without lost of generality,
we may assume that $d = \dim(B_1 \oplus B_2 \oplus \ldots \oplus B_r)$
where 
$\bigl[[  H\varkappa(B_1), a_{11}, \ldots, a_{1q_1}], [  H\varkappa(B_2), a_{21}, \ldots, a_{2q_2}], \ldots, [ H\varkappa(B_r),
 a_{r1}, \ldots, a_{rq_r}]\bigr] \ne 0$ for some $q_i\geqslant 0$
 and $a_{kj}\in L$. 
 Since $N$ is nilpotent, we can increase $q_i$ adding to $\lbrace a_{ij} \rbrace$
 sufficiently many elements of $N$ such that
$$\bigl[[ {\gamma_1}\varkappa(b_1), a_{11}, \ldots, a_{1q_1}], [ {\gamma_2}\varkappa(b_2), a_{21}, \ldots, a_{2q_2}], \ldots, [{\gamma_r}\varkappa(b_r),
 a_{r1}, \ldots, a_{rq_r}]\bigr] \ne 0$$ for some $q_i\geqslant 0$, $b_i \in B_i$, $\gamma_i \in H$,
 however
 \begin{equation}\label{Eqbazero}\bigl[[ \tilde b_1, a_{11}, \ldots, a_{1q_1}], [ \tilde b_2, a_{21}, \ldots, a_{2q_2}], \ldots, [\tilde b_r,
 a_{r1}, \ldots, a_{rq_r}]\bigr] = 0 \end{equation} for all $t_i \geqslant 0$, $\tilde b_i\in [H\varkappa(B_i), \underbrace{L, \ldots, L}_{t_i}]$
 such that $\tilde b_j\in [H\varkappa(B_j), L, \ldots, L, N, L, \ldots, L]$ for at least one $j$.
 
 Recall that $\varkappa$ is a homomorphism of algebras.
Moreover $\pi(h\varkappa(a)-\varkappa(ha))=0$ 
implies $h\varkappa(a)-\varkappa(ha) \in N$ for all $a\in L$ and $h\in H$.
Hence, by~(\ref{Eqbazero}), if we replace $\varkappa(b_i)$ in $$\bigl[[ \gamma_1\varkappa(b_1), a_{11}, \ldots, a_{1q_1}], [  \gamma_2\varkappa(b_2), a_{21}, \ldots, a_{2q_2}], \ldots, [ \gamma_r\varkappa(b_r),
 a_{r1}, \ldots, a_{rq_r}]\bigr]$$ with the commutator of $\varkappa(b_i)$ and an expression involving $\varkappa$, the map $\varkappa$
 will behave like a homomorphism of $H$-modules.
 We will exploit this property further.

In virtue of~\cite[Theorem~11]{ASGordienko5},
there exist constants $m_i \in \mathbb Z_+$
such that for any $k$ there exist
 multilinear associative $H$-polynomials $f_i$ of degree $(2kd_i + m_i)$,
$d_i := \dim B_i$,
alternating in the variables from disjoint sets
$X^{(i)}_{\ell}$, $1 \leqslant \ell \leqslant 2k$, $|X^{(i)}_{\ell}|=d_i$,
such that each $f_i$ does not vanish under some evaluation in $\ad B_i$.

Since $B_i$ is an irreducible $(H, \ad B_i)$-module, by the Density Theorem,
 $\End_F(B_i)$ is generated by the operators from~$H$ and~$\ad B_i$.
  Note that $\End_F(B_i) \cong M_{d_i}(F)$.
Thus every matrix unit $e^{(i)}_{j\ell} \in M_{d_i}(F)$ can be
represented as a polynomial in operators from $H$
and $\ad B_i$. Choose such polynomials
for all $i$ and all matrix units. Denote by $m_0$ the maximal degree of those
polynomials.

Let $n_0 := r(2m_0+1)+ \sum_{i=1}^r (m_i+q_i)$.
Now we choose $f_i$ for $k = \left[\frac{n-n_0}{2d}\right]$.
In addition, we choose $\tilde f_1$ for $\tilde k = \left[\frac{n-2kd-m_1}{2d_1}\right]+1$
and $B_{i_1}$ using~\cite[Theorem~11]{ASGordienko5} once again. The polynomials $f_i$ will deliver us the required alternations. However, the total degree of the product may be less than $n$. We will use $\tilde f_1$ to increase the number of variables and obtain a polynomial of degree $n$.

By~\cite[Theorem~11]{ASGordienko5},
there exist $\bar x_{i1}, \ldots, \bar x_{i, 2k d_i+m_i} \in B_i$
such that $$f_i(\ad \bar x_{i1}, \ldots, \ad \bar x_{i, 2k d_i+m_i})\ne 0,$$
and $\bar x_1, \ldots, \bar x_{2\tilde k d_1+m_1} \in B_1$
such that $\tilde f_1(\ad \bar x_1, \ldots, \ad \bar x_{2\tilde k d_1+m_1}) \ne 0$.
Hence $$e^{(i)}_{\ell_i \ell_i} f_i(\ad \bar x_{i1}, \ldots, \ad \bar x_{i, 2k d_i+m_i})
e^{(i)}_{s_i s_i} \ne 0$$
and $$e^{(1)}_{\tilde\ell \tilde\ell}\tilde f_1(\ad \bar x_1, \ldots, \ad \bar x_{2\tilde k d_1+m_1})
e^{(1)}_{\tilde s \tilde s} \ne 0$$
 for some matrix units $e^{(i)}_{\ell_i \ell_i},
e^{(i)}_{s_i s_i} \in \End_F(B_i)$, $1 \leqslant \ell_i, s_i \leqslant d_i$,
$e^{(1)}_{\tilde\ell \tilde\ell}, e^{(1)}_{\tilde s \tilde s} \in \End_F(B_1)$, $1 \leqslant \tilde \ell,
\tilde s \leqslant d_1$.
Thus $$\sum_{\ell=1}^{d_i}
e^{(i)}_{\ell \ell_i} f_i(\bar x_{i1}, \ldots, \bar x_{i, 2k d_i+m_i})
 e^{(i)}_{s_i \ell}$$ is a nonzero scalar operator in $\End_F(B_i)$.

Hence
$$ [[\gamma_1\varkappa\left(\sum_{\ell=1}^{d_1}
e^{(1)}_{\ell \ell_1} f_1(\ad \bar x_{11}, \ldots, \ad \bar x_{1,2k d_1+m_1})
e^{(1)}_{s_1 \tilde \ell} \tilde f_1(\ad \bar x_1, \ldots, \ad \bar x_{2\tilde k d_1+m_1})
 e^{(1)}_{\tilde s \ell}b_1\right), a_{11}, \ldots, a_{1q_1}],$$
 $$ [\gamma_2\varkappa\left(\sum_{\ell=1}^{d_2}
e^{(2)}_{\ell \ell_2} f_2(\ad \bar x_{21}, \ldots, \ad \bar x_{2,2k d_2+m_2})
 e^{(2)}_{s_2 \ell}b_2\right), a_{21}, \ldots, a_{2q_2}],
 \ldots, $$
 $$
 [\gamma_r\varkappa\left(\sum_{\ell=1}^{d_r}
e^{(r)}_{\ell \ell_r} f_r(\ad \bar x_{r1}, \ldots, \ad \bar x_{r, 2k d_r+m_r})
 e^{(r)}_{s_r \ell}b_r\right), a_{r1}, \ldots, a_{rq_r}]]\ne 0.$$

Now we rewrite
$e^{(i)}_{\ell j}$ as polynomials in elements of $\ad B_i$
and $H$.
Using linearity of the expression in $e^{(i)}_{\ell j}$,
we can replace $e^{(i)}_{\ell j}$ with the products
of elements from $\ad B_i$
and $H$, and the expression will not vanish
for some choice of the products. By the definition 
of an $H$-module algebra,
$h(\ad a )b=\ad (h_{(1)}a)(h_{(2)}b)$
for all $h\in H$ and $a, b \in L$. Hence
we can move all elements from $H$ to the right.
As we have mentioned, $\varkappa$ is a homomorphism of algebras and, by~(\ref{Eqbazero}), behaves like
a homomorphism of $H$-modules. Hence
we get
$$  a_0 := \biggl[\Bigl[\gamma_1\Bigl[\bar y_{11}, [\bar y_{12}, \ldots
 [\bar y_{1 \alpha_1}, 
$$ $$
  (f_1(\ad \varkappa(\bar x_{11}), \ldots, \ad \varkappa(\bar x_{1, 2k d_1+m_1})))^{h_1}
 [\bar w_{11}, [\bar w_{12}, \ldots, [\bar w_{1 \theta_1},$$ $$
 (\tilde f_1(\ad \varkappa(\bar x_1), \ldots, \ad \varkappa(\bar x_{2\tilde k d_1+m_1})))^{\tilde h}
 [\bar w_{1}, [\bar w_{2}, \ldots, [\bar w_{\tilde \theta},
  \varkappa({h'_1}b_1)]\ldots \Bigr],
  a_{11}, \ldots, a_{1q_1}\Bigr],$$
  $$\Bigl[\gamma_2\Bigl[\bar y_{21}, [\bar y_{22}, \ldots
 [\bar y_{2 \alpha_2}, $$ $$
  (f_2(\ad \varkappa(\bar x_{21}), \ldots, \ad \varkappa(\bar x_{2, 2k d_2+m_2})))^{h_2}
 [\bar w_{21}, [\bar w_{22}, \ldots, [\bar w_{2 \theta_2},
  \varkappa({h'_2}b_2)]\ldots \Bigr],
  a_{21}, \ldots, a_{2q_2}\Bigr],
 \ldots, $$
 $$\Bigl[\gamma_r\Bigl[\bar y_{r1}, [\bar y_{r2}, \ldots,
 [\bar y_{r \alpha_r},  $$ $$
 (f_r(\ad \varkappa(\bar x_{r1}), \ldots, \ad \varkappa(\bar x_{r, 2k d_r+m_r})))^{h_r}
 [\bar w_{r1}, [\bar w_{r2}, \ldots, [\bar w_{r \theta_r}, \varkappa({h'_r}b_r)]\ldots \Bigr],
  a_{r1}, \ldots, a_{rq_r}\Bigr]\biggr]\ne 0$$
 for some  $0 \leqslant \alpha_i, \theta_i, \tilde \theta \leqslant m_0$,
  \quad $h_i, h'_i, \tilde h \in H$,\quad $\bar y_{ij}, \bar w_{ij} \in \varkappa(B_i)$,
  \quad $\bar w_j \in \varkappa(B_1)$.
 
 We assume that each $f_i$ is a polynomial in $x_{i1}, \ldots,
x_{i,2k d_i+m_i}$ and $\tilde f_1$ is a polynomial in $x_1, \ldots, x_{2\tilde k d_1 + m_1}$.
Denote $X_\ell := \bigcup_{i=1}^{r} X^{(i)}_{\ell}$
where $f_i$ is alternating in the variables of each $X^{(i)}_{\ell}$.
Let $\Alt_\ell$ be the operator of alternation
in the variables from $X_\ell$. 

Consider
$$\hat f :=
 \Alt_1 \Alt_2 \ldots \Alt_{2k} \biggl[\Bigl[\gamma_1\Bigl[y_{11}, [y_{12}, \ldots
 [y_{1 \alpha_1}, 
$$ $$
  (f_1(\ad x_{11}, \ldots, \ad x_{1, 2k d_1+m_1}))^{h_1}
 [w_{11}, [w_{12}, \ldots, [w_{1 \theta_1},$$ $$
 (\tilde f_1(\ad x_1, \ldots, \ad x_{2\tilde k d_1+m_1}))^{\tilde h}
 [w_{1}, [w_{2}, \ldots, [w_{\tilde \theta},
  z_1]\ldots \Bigr],
  u_{11}, \ldots, u_{1q_1}\Bigr],$$
  $$\Bigl[\gamma_2\Bigl[y_{21}, [y_{22}, \ldots
 [y_{2 \alpha_2}, $$ $$
  (f_2(\ad x_{21}, \ldots, \ad x_{2, 2k d_2+m_2}))^{h_2}
 [w_{21}, [w_{22}, \ldots, [w_{2 \theta_2},
  z_2]\ldots \Bigr],
  u_{21}, \ldots, u_{2q_2}\Bigr],
 \ldots, $$
 $$\Bigl[\gamma_r\Bigl[y_{r1}, [y_{r2}, \ldots,
 [y_{r \alpha_r},  $$ $$
 (f_r(\ad x_{r1}, \ldots, \ad x_{r, 2k d_r+m_r}))^{h_r}
 [w_{r1}, [w_{r2}, \ldots, [w_{r \theta_r}, z_r]\ldots \Bigr],
  u_{r1}, \ldots, u_{rq_r}\Bigr]\biggr].$$

Then the value of $\hat f$
under the substitution
$z_i=\varkappa({h'_i}b_i)$, $u_{i\ell}=a_{i\ell}$,
 $x_{i\ell}=\varkappa(\bar x_{i\ell})$, $x_i = \varkappa(\bar x_i)$, $y_{i\ell}=\bar y_{i\ell}$, $w_{i\ell}=\bar w_{i\ell}$, $w_i = \bar w_i$
 equals $(d_1!)^{2k} \ldots (d_r!)^{2k} a_0 \ne 0$
since $f_i$ are alternating in the variables of each $X^{(i)}_{\ell}$, $[B_i, B_\ell] = 0$ for $i \ne \ell$,
and $\varkappa$ is a homomorphism of algebras.
 
 Hence $$f_0 := 
  \Alt_1 \Alt_2 \ldots \Alt_{2k} \biggl[\Bigl[\gamma_1\Bigl[y_{11}, [y_{12}, \ldots
 [y_{1 \alpha_1}, 
$$ $$
  (f_1(\ad x_{11}, \ldots, \ad x_{1, 2k d_1+m_1}))^{h_1}
 [w_{11}, [w_{12}, \ldots, [w_{1 \theta_1},
  z_1]\ldots \Bigr],
  u_{11}, \ldots, u_{1q_1}\Bigr],$$
  $$\Bigl[\gamma_2\Bigl[y_{21}, [y_{22}, \ldots
 [y_{2 \alpha_2}, $$ $$
  (f_2(\ad x_{21}, \ldots, \ad x_{2, 2k d_2+m_2}))^{h_2}
 [w_{21}, [w_{22}, \ldots, [w_{2 \theta_2},
  z_2]\ldots \Bigr],
  u_{21}, \ldots, u_{2q_2}\Bigr],
 \ldots, $$
 $$\Bigl[\gamma_r\Bigl[y_{r1}, [y_{r2}, \ldots,
 [y_{r \alpha_r},  $$ $$
 (f_r(\ad x_{r1}, \ldots, \ad x_{r, 2k d_r+m_r}))^{h_r}
 [w_{r1}, [w_{r2}, \ldots, [w_{r \theta_r}, z_r]\ldots \Bigr],
  u_{r1}, \ldots, u_{rq_r}\Bigr]\biggr]$$
   does not vanish under the substitution
 $$z_1 = (\tilde f_1(\ad \varkappa(\bar x_1), \ldots, \ad \varkappa(\bar x_{2\tilde k d_1+m_1})))^{\tilde h}
 [\bar w_{1}, [\bar w_{2}, \ldots, [\bar w_{\tilde \theta},
 \varkappa(h'_1 b_1)]\ldots],$$
  $z_i=\varkappa(h'_i b_i)$ for $2 \leqslant i \leqslant r$; $u_{i\ell}=a_{i\ell}$,
 $x_{i\ell}=\varkappa(\bar x_{i\ell})$, $y_{i\ell}=\bar y_{i\ell}$, $w_{i\ell}=\bar w_{i\ell}$.
 
Note that $f_0 \in V_{\tilde n}^H$,
  $\tilde n: = 2kd +r+ \sum_{i=1}^r (m_i + q_i + \alpha_i+\theta_i)
  \leqslant n$. If $n=\tilde n$, then we take $f:=f_0$.
  Suppose $n > \tilde n$.
Note that $(\tilde f_1(\ad \varkappa(\bar x_1), \ldots, \ad \varkappa(\bar x_{2\tilde k d_1+m_1})))^{\tilde h}
 [\bar w_{1}, [\bar w_{2}, \ldots, [\bar w_{\tilde \theta},
  \varkappa(h'_1 b_1)]\ldots]$ is a linear combination of long commutators.
  Each of these commutators contains at least $2\tilde k d_1+m_1+1 > n-\tilde n+1$
  elements of $L$.
       Hence $ f_0$ does not vanish under a substitution
 $z_1 = [\bar v_1, [\bar v_2, [\ldots, [\bar v_\theta,  \varkappa(h'_1 b_1)]\ldots]$
 for some $\theta \geqslant n-\tilde n$, $\bar v_i \in L$;
  $z_i=\varkappa(h'_i b_i)$ for $2 \leqslant i \leqslant r$; $u_{i\ell}=a_{i\ell}$,
 $x_{i\ell}=\varkappa(\bar x_{i\ell})$, $y_{i\ell}=\bar y_{i\ell}$,
  $w_{i\ell}=\bar w_{i\ell}$.
Therefore, $$f :=  \Alt_1 \Alt_2 \ldots \Alt_{2k} \biggl[\Bigl[\gamma_1\Bigl[y_{11}, [y_{12}, \ldots
 [y_{1 \alpha_1}, 
$$ $$
  (f_1(\ad x_{11}, \ldots, \ad x_{1, 2k d_1+m_1}))^{h_1}
 [w_{11}, [w_{12}, \ldots, [w_{1 \theta_1},$$
$$ 
 \bigl[v_1, [v_2, [\ldots, [v_{n-\tilde n}, z_1]\ldots\bigr]\ldots \Bigr],
  u_{11}, \ldots, u_{1q_1}\Bigr],$$
  $$\Bigl[\gamma_2\Bigl[y_{21}, [y_{22}, \ldots
 [y_{2 \alpha_2}, $$ $$
  (f_2(\ad x_{21}, \ldots, \ad x_{2, 2k d_2+m_2}))^{h_2}
 [w_{21}, [w_{22}, \ldots, [w_{2 \theta_2},
  z_2]\ldots \Bigr],
  u_{21}, \ldots, u_{2q_2}\Bigr],
 \ldots, $$
 $$\Bigl[\gamma_r\Bigl[y_{r1}, [y_{r2}, \ldots,
 [y_{r \alpha_r},  $$ $$
 (f_r(\ad x_{r1}, \ldots, \ad x_{r, 2k d_r+m_r}))^{h_r}
 [w_{r1}, [w_{r2}, \ldots, [w_{r \theta_r}, z_r]\ldots \Bigr],
  u_{r1}, \ldots, u_{rq_r}\Bigr]\biggr]$$
  does not vanish under the substitution
  $v_\ell = \bar v_\ell$, $1 \leqslant \ell \leqslant n-\tilde n$,
  $$z_1 = [\bar v_{n-\tilde n +1}, [\bar v_{n-\tilde n +2}, [\ldots, [\bar v_\theta,  \varkappa(h'_1 b_1)]\ldots];$$
  $z_i=\varkappa(h'_i b_i)$ for $2 \leqslant i \leqslant r$; $u_{i\ell}=a_{i\ell}$,
 $x_{i\ell}=\varkappa(\bar x_{i\ell})$, $y_{i\ell}=\bar y_{i\ell}$, $w_{i\ell}=\bar w_{i\ell}$.
 Note that $f \in V_n^H$ and satisfies all the conditions of the lemma.
\end{proof}

Lemma~\ref{LemmaHRSameCochar} is an analog of~\cite[Lemma~21]{ASGordienko5}.

\begin{lemma}\label{LemmaHRSameCochar} Let
 $k, n_0$ be the numbers from
Lemma~\ref{LemmaHRSameLowerPolynomial}. Then for every $n \geqslant n_0$ there exists
a partition $\lambda = (\lambda_1, \ldots, \lambda_s) \vdash n$,
$\lambda_i \geqslant 2k-p$ for every $1 \leqslant i \leqslant d$,
with $m(L, H, \lambda) \ne 0$.
Here $p \in \mathbb N$ is such a number that $N^p=0$.
\end{lemma}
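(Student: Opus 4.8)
The plan is to extract from the non-vanishing polynomial $f$ produced in Lemma~\ref{LemmaHRSameLowerPolynomial} a highest weight vector for a suitable $S_n$-irreducible, and thereby conclude $m(L,H,\lambda)\ne 0$ for a partition $\lambda$ with long first $d$ columns. Recall that $f\in V_n^H\setminus\Id^H(L)$ is alternating in the variables of each of the $2k$ disjoint sets $X_1,\ldots,X_{2k}$, each of cardinality exactly $d$. The standard device (going back to the associative case in~\cite{ZaiGia} and used in~\cite[Lemma~21]{ASGordienko5}) is that alternation in $2k$ disjoint $d$-element sets forces any irreducible constituent $\chi(\mu)$ appearing in the $S_n$-submodule generated by $f$ to have $\mu_i\geqslant 2k$ for all $1\leqslant i\leqslant d$; since $f$ itself need not be a highest weight vector, one passes to a partition $\lambda$ differing from such a $\mu$ by at most $p$ in the relevant rows, giving $\lambda_i\geqslant 2k-p$ for $1\leqslant i\leqslant d$.

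First I would recall the representation-theoretic mechanism precisely. The ideal $V_n^H\cap\Id^H(L)$ is $S_n$-invariant, so $\chi_n^H(L)$ is the character of the quotient $V_n^H/(V_n^H\cap\Id^H(L))$; it suffices to find one $\lambda$ with $\lambda_i\ge 2k-p$ ($1\le i\le d$) whose isotypic component in this quotient is nonzero, equivalently $e^*_{T_\lambda}g\notin\Id^H(L)$ for some $g\in V_n^H$ and some tableau $T_\lambda$. Take $g:=f$. Because $f$ is alternating in each $X_j$, for any column-skew symmetrization we only lose information if the columns are too short; the combinatorial fact (the pigeonhole/``height'' argument of~\cite[Lemmas~6.2.4,~6.2.5]{ZaiGia}) is that the image of $f$ under the projection onto the sum of isotypic components $\sum_{\mu}M(\mu)$ with $\mu_d<2k$ must already lie in $\Id^H(L)$ — intuitively, such $\mu$ cannot accommodate $2k$ disjoint alternating blocks of size $d$ without a column of height $\le d$ being symmetrized against an alternating set, forcing zero. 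Hence $f$ has a nonzero component in some $M(\mu)$ with $\mu_d\ge 2k$, and consequently $m(L,H,\mu)\ne 0$ for that $\mu$; one then sets $\lambda=\mu$, or more carefully accounts for the at most $p$ ``extra'' variables coming from the $N$-part of the construction (the commutators $[v_1,[v_2,\ldots]]$ padding the degree up to $n$), which can shorten the guaranteed columns by at most $p$, yielding $\lambda_i\ge 2k-p$ for $1\le i\le d$.

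The second ingredient I would invoke is the bridge between alternation and the shape of $\lambda$ in the form already used in the companion paper: namely~\cite[Lemma~21]{ASGordienko5} is proved by exactly this route, and Lemma~\ref{LemmaHRSameLowerPolynomial} has been set up to feed into it verbatim — the polynomial $f$, the sets $X_j$, the numbers $k$ and $n_0$, and the nilpotency index $p$ all play the same roles. So concretely the proof is: apply Lemma~\ref{LemmaHRSameLowerPolynomial} to obtain $f$, $X_1,\ldots,X_{2k}$; then repeat the argument of~\cite[Lemma~21]{ASGordienko5} with $d$ in place of the quantity there, using that $F$ is algebraically closed of characteristic $0$ so that $FS_n$ is semisimple and the isotypic decomposition is available, to produce the desired $\lambda\vdash n$ with $\lambda_i\ge 2k-p$ for $1\le i\le d$ and $m(L,H,\lambda)\ne 0$.

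The main obstacle — and the only place the $H$-action intervenes at all — is bookkeeping: one must make sure that the $S_n$-action used (permuting $x_1,\ldots,x_n$) genuinely commutes with passing to the quotient by $\Id^H(L)$, which it does because $\Id^H(L)$ is an $H$-invariant ideal stable under variable permutations, and that the ``defect'' in column length really is bounded by $p$ rather than by something growing with $n$ — this is exactly the role of $N^p=0$, which caps the number of $N$-elements that can appear in any non-identically-vanishing evaluation, and which has already been exploited identically in Lemma~\ref{LemmaNRSameUpperCochar}. Everything else is the by-now-routine Young-symmetrizer argument, and I would present it by reduction to~\cite[Lemma~21]{ASGordienko5} rather than reproving the combinatorics.
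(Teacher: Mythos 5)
Your overall strategy coincides with the paper's: decompose $FS_n f = \sum_{\lambda, T_\lambda} FS_n e^{*}_{T_\lambda} f$, conclude that $e^{*}_{T_\lambda} f \notin \Id^H(L)$ for some $\lambda$, and then constrain the shape of $\lambda$ using the alternations of $f$ together with Lemma~\ref{LemmaNRSameUpperCochar}. However, the central combinatorial step is not correct as you state it. Alternation of $f$ in $2k$ disjoint sets of size $d$ does \emph{not} force every irreducible constituent $M(\mu)$ of $FS_n f$ to satisfy $\mu_i \geqslant 2k$ for $i \leqslant d$: for instance, a product of $2k$ pairwise alternating blocks can have a nonzero fully antisymmetric component, whose partition is a single column with $\mu_2 = 1$. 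What alternation actually yields (because $e^{*}_{T_\lambda} = b_{T_\lambda}a_{T_\lambda}$ and $a_{T_\lambda}$ symmetrizes the rows) is an \emph{upper} bound on the rows: each row of $T_\lambda$ contains at most one variable from each $X_j$, whence $\sum_{i=1}^{d-1}\lambda_i \leqslant 2k(d-1) + (n - 2kd) = n - 2k$. The lower bound $\lambda_d \geqslant 2k - p$ then requires the second, independent inequality $\sum_{i=1}^{d}\lambda_i \geqslant n - p$, which is exactly Lemma~\ref{LemmaNRSameUpperCochar} (a partition with at least $p$ boxes below row $d$ has zero multiplicity); subtracting the two inequalities gives $\lambda_d \geqslant 2k - p$, and hence $\lambda_i \geqslant 2k-p$ for all $i \leqslant d$.

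You do eventually point to the right source of the loss of $p$ (you note that $N^p=0$ enters exactly as in Lemma~\ref{LemmaNRSameUpperCochar}), but in the place where you actually derive $2k-p$ you attribute it to the padding commutators $[v_1,[v_2,\ldots]]$ that raise the degree to $n$; that is the wrong mechanism, since those variables are simply among the $n-2kd$ non-alternated ones and are already absorbed by the row estimate. Also, \cite[Lemmas~6.2.4, 6.2.5]{ZaiGia} are the counting lemmas used for the upper bound in Lemma~\ref{LemmaNRSameUpper}, not the alternation-versus-shape argument you invoke them for. So the skeleton and the reduction to the argument of \cite[Lemma~21]{ASGordienko5} are legitimate, but the inequality chain that actually proves $\lambda_i \geqslant 2k-p$ is asserted with an incorrect justification and needs to be replaced by the two-sided estimate above.
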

\begin{proof}
Consider the polynomial $f$ from Lemma~\ref{LemmaHRSameLowerPolynomial}.
It is sufficient to prove that $e^*_{T_\lambda} f \notin \Id^H(L)$
for some tableau $T_\lambda$ of the desired shape $\lambda$.
It is known that $FS_n = \bigoplus_{\lambda,T_\lambda} FS_n e^{*}_{T_\lambda}$ where the summation
runs over the set of all standard tableax $T_\lambda$,
$\lambda \vdash n$. Thus $FS_n f = \sum_{\lambda,T_\lambda} FS_n e^{*}_{T_\lambda}f
\not\subseteq \Id^H(L)$ and $e^{*}_{T_\lambda} f \notin \Id^H(L)$ for some $\lambda \vdash n$.
We claim that $\lambda$ is of the desired shape.
It is sufficient to prove that
$\lambda_d \geqslant 2k-p$, since
$\lambda_i \geqslant \lambda_d$ for every $1 \leqslant i \leqslant d$.
Each row of $T_\lambda$ includes numbers
of no more than one variable from each $X_i$,
since $e^{*}_{T_\lambda} = b_{T_\lambda} a_{T_\lambda}$
and $a_{T_\lambda}$ is symmetrizing the variables of each row.
Thus $\sum_{i=1}^{d-1} \lambda_i \leqslant 2k(d-1) + (n-2kd) = n-2k$.
In virtue of Lemma~\ref{LemmaNRSameUpperCochar},
$\sum_{i=1}^d \lambda_i \geqslant n-p$. Therefore
$\lambda_d \geqslant 2k-p$.
\end{proof}

\begin{proof}[Proof of Theorem~\ref{TheoremMainNRSame}]
Let $K \supset F$ be an extension of the field $F$.
Then $$(L \otimes_F K)/(N \otimes_F K) \cong (L/N) \otimes_F K$$
is again a semisimple Lie algebra and $N \otimes_F K$ is still nilpotent.
As we have already mentioned, $H$-codimensions do not change upon an extension of $F$.
Hence we may assume $F$ to be algebraically closed.

 The Young diagram~$D_\lambda$ from Lemma~\ref{LemmaHRSameCochar} contains
the rectangular subdiagram~$D_\mu$, $\mu=(\underbrace{2k-p, \ldots, 2k-p}_d)$.
The branching rule for $S_n$ implies that if we consider the restriction of
$S_n$-action on $M(\lambda)$ to $S_{n-1}$, then
$M(\lambda)$ becomes the direct sum of all non-isomorphic
$FS_{n-1}$-modules $M(\nu)$, $\nu \vdash (n-1)$, where each $D_\nu$ is obtained
from $D_\lambda$ by deleting one box. In particular,
$\dim M(\nu) \leqslant \dim M(\lambda)$.
Applying the rule $(n-d(2k-p))$ times, we obtain $\dim M(\mu) \leqslant \dim M(\lambda)$.
By the hook formula, $$\dim M(\mu) = \frac{(d(2k-p))!}{\prod_{i,j} h_{ij}}$$
where $h_{ij}$ is the length of the hook with edge in $(i, j)$.
By Stirling formula,
$$c_n^H(L)\geqslant \dim M(\lambda) \geqslant \dim M(\mu) \geqslant \frac{(d(2k-p))!}{((2k-p+d)!)^d}
\sim $$ $$\frac{
\sqrt{2\pi d(2k-p)} \left(\frac{d(2k-p)}{e}\right)^{d(2k-p)}
}
{
\left(\sqrt{2\pi (2k-p+d)}
\left(\frac{2k-p+d}{e}\right)^{2k-p+d}\right)^d
} \sim C_4 k^{r_4} d^{2kd}$$
for some constants $C_4 > 0$, $r_4 \in \mathbb Q$,
as $k \to \infty$.
Since $k = \left[\frac{n-n_0}{2d}\right]$,
this gives the lower bound.
The upper bound has been proved in Lemma~\ref{LemmaNRSameUpper}.
\end{proof}

\section*{Acknowledgements}

This work started while I was an AARMS postdoctoral fellow at Memorial University of Newfoundland, whose faculty and staff I would like to thank for hospitality. I am grateful to Yuri Bahturin, who suggested that I study polynomial $H$-identities, and to Mikhail Zaicev, who suggested that I consider algebras without
an $H$-invariant Levi decomposition. In addition, I appreciate Mikhail Kochetov for helpful
discussions. Finally, I am grateful to the referee who found several misprints and gave some useful advices on how to improve the exposition.

\end{document}